\newcommand{\vanish}[1]{\relax}
\newcommand{\sector}[1]{S_{#1}}
\newcommand{\HT}{\mathscr{H}}
\newcommand{\PV}{\mathrm{PV}}
\newcommand{\Mlt}{\mathcal{M}}
\newcommand{\DD}{\mathscr{D}}
\newcommand{\RR}{\mathscr{R}}
\newcommand{\resolv}{\varrho}
\DeclareMathOperator{\Cos}{Cos}
\DeclareMathOperator{\Sin}{Sin}
\DeclareMathOperator{\calE}{\mathcal{E}}
\DeclareMathOperator{\calF}{\mathcal{F}}
\newcommand{\suchthat}{\,\,|\,\,}
\newcommand{\Sum}[2][\relax]{%
 \ifx#1\relax \sideset{}{_{#2}}\sum 
 \else \sideset{}{^{#1}_{#2}}\sum
 \fi}
\renewcommand{\phi}{\varphi}
\renewcommand{\epsilon}{\varepsilon}
\newcommand{\N}{\mathbb{N}}
\newcommand{\R}{\mathbb{R}}
\newcommand{\C}{\mathbb{C}}
\newcommand{\ohne}{\setminus}
\newcommand{\set}[2][\relax]{%
  \ifx#1\relax \ensuremath{
  \left\lbrace#2\right\rbrace}
  \else \ensuremath{%
  \setbox0=\hbox{\ensuremath{#2}}
  \dimen@\ht0
  \advance\dimen@ by \dp0
  \left\lbrace\left.#1\,\rule[-\dp0]{0pt}{\dimen@}\right|#2\right\rbrace}
  \fi}
\newcommand{\pfeil}{\longrightarrow}
\newcommand{\tpfeil}{\longmapsto}
\newcommand{\car}{\mathbf{1}}
\DeclareMathOperator{\re}{Re}
\DeclareMathOperator{\im}{Im}
\DeclareMathOperator{\sgn}{sgn}
\newcommand{\cls}[1]{\overline{#1}}
\newcommand{\abs}[1]{\left| #1 \right|}
\newcommand{\rand}{\partial}
\renewcommand{\abs}[1]{\left\vert#1\right\vert}
\DeclareMathOperator{\Lin}{\mathcal{L}}
\newcommand{\norm}[2][\relax]{%
   \ifx#1\relax \ensuremath{\left\Vert#2\right\Vert}
   \else \ensuremath{\left\Vert#2\right\Vert_{#1}}
   \fi}
\newcommand{\sprod}[2]{\ensuremath{%
  \setbox0=\hbox{\ensuremath{#2}}
  \dimen@\ht0
  \advance\dimen@ by \dp0
  \left(\left.#1\rule[-\dp0]{0pt}{\dimen@}\,\right|#2\hspace{1pt}\right)}}
\newcommand{\Borel}{\mathfrak{B}}
\newcommand{\spacefont}{\mathbf}
\newcommand{\Ct}[2][\relax]{%
 \ifx#1\relax \ensuremath{ {\mathbf{C}}^{\mathbf{#2}}  }
 \else {\mathbf{C}}^{\mathbf{#2}}_{\boldsymbol{#1}}
 \fi}
\newcommand{\Bt}[2][\relax]{%
 \ifx#1\relax \ensuremath{ {\mathbf{B}}^{\mathbf{#2}}  }
 \else {\mathbf{B}}^{\mathbf{#2}}_{\boldsymbol{#1}}
 \fi}
\newcommand{\ct}[1][\relax]{%
 \ifx#1\relax \mathbf{c}
 \else \mathbf{c}_{\boldsymbol{#1}}
 \fi}
\newcommand{\Ell}[2][\relax]{%
   \ifx#1\relax \mathbf{L}^{\boldsymbol{#2}}
   \else \mathbf{L}^{\boldsymbol{#2}}_{\boldsymbol{#1}}
   \fi}
\newcommand{\Wee}[2][\relax]{%
   \ifx#1\relax \mathbf{W}^{\boldsymbol{#2}}
   \else \mathbf{W}^{\boldsymbol{#2}}_{\boldsymbol{#1}}
   \fi}
\newcommand{\Har}[2][\relax]{%
   \ifx#1\relax \boldsymbol{\mathsf{H}}^{\boldsymbol{#2}}
   \else \boldsymbol{\mathsf{H}}^{\boldsymbol{#2}}_{\boldsymbol{#1}}
   \fi}
\newcommand{\eM}{\spacefont{M}}
\newcommand{\Fourier}{\mathcal{F}}
\newcommand{\fourier}[1]{\widehat{#1}}
\newcommand{\sfunk}{\boldsymbol{\mathcal{S}}} 
\newcommand{\distr}[1][\relax]{%
    \ifx#1\relax \spacefont{D}
    \else \spacefont{D}^{\mathbf{#1}}
    \fi}
\newcounter{aufzi}
\newenvironment{aufzi}{\begin{list}{ {\upshape\alph{aufzi})}}{
        \usecounter{aufzi}
        \topsep1ex
        \parsep0cm
        \itemsep1ex
        \leftmargin1cm
        \labelwidth0.5cm
        \labelsep0.3cm
}}
{\end{list}}
\newcounter{aufzii}
\newcounter{aufziii}
\newenvironment{aufziii}{\begin{list}{ {\upshape\arabic{aufziii})}}{
        \usecounter{aufziii}
        \topsep1ex
        \parsep0cm
        \itemsep1ex
        \leftmargin0.8cm
        \labelwidth0.5cm
        \labelsep0.3cm
}}
{\end{list}}
\newcounter{aufziv}
\newcommand{\ccalT}{\mathcal{T}}
\newcommand{\diff}[1]{\mathrm{d}#1}
\newcommand{\name}{\textsc}
\newcommand{\expo}[1]{\mathrm{e}^{#1}}
\newcommand{\CT}{\mathcal{C}}
\newtheorem{thm}{Theorem}[section]
\newtheorem{lemma}[thm]{Lemma}
\newtheorem{prop}[thm]{Proposition}
\theoremstyle{definition}
\newtheorem{rem}[thm]{Remark}
\newlength{\usualparindent}
\begin{document}

\title[The Group Reduction for Bounded Cosine Functions]{The Group Reduction for
Bounded Cosine Functions on UMD Spaces}
\author{\textsc{Markus Haase}}

\address{Delft Institute of Applied Mathematics, Technical University Delft,
PO Box 5031, 2600 GA Delft, The Netherlands} 
\email{m.h.a.haase@tudelft.nl} 
\subjclass{47A60,47D06}
\keywords{cosine function, transference principle, $C_0$-semigroup, group, 
functional calculus, UMD space, Fattorini's theorem}

\thanks{Author's e-mail address: {\tt m.h.a.haase@tudelft.nl}}

\thanks{This work was completed with the support of the EU Marie Curie
``Transfer of Knowledge'' project ``Operator Theory Methods for Differential
Equations''.}   
\date{5 September 2007}

\begin{abstract}
{\sffamily It is shown that if $A$ generates a bounded cosine operator
function on a UMD space $X$, then $i(-A)^{1/2}$ generates a bounded $C_0$-group.
The proof uses a transference principle for cosine functions.}


\end{abstract}

\renewcommand{\subjclassname}{\textup{2000} Mathematics Subject Classification}
\maketitle

\section{Introduction}\label{s.intro}

A cosine function on a (complex) Banach
space $X$  is a strongly continuous mapping
$\Cos : \R \pfeil \Lin(X)$ that satisfies the identity
\begin{equation}\label{int.e.cos}
 \Cos(t +s) + \Cos(t-s) = 2\Cos(t) \Cos(s) \quad \quad (t,s\in \R)
\end{equation}
as well as $\Cos(0) = I$.
One can prove from this that a cosine function is exponentially bounded, e.g.
\[ \theta(\Cos) := \inf \big\{ \omega \ge 0 \suchthat \exists M \ge 1 :
\norm{\Cos(t)} \le M\expo{\omega \abs{t}}, t \in \R\big\} < \infty. 
\]
The {\em generator} of a cosine function $\Cos$ is defined as the unique
operator $A$ such that 
\begin{equation}\label{int.e.gen}
 \lambda R(\lambda^2,A) = \int_0^\infty \expo{-\lambda t} \Cos(t)\,\diff{t}
\quad \quad (\lambda > \theta(\Cos)).
\end{equation}
The generator $A$ is densely defined, and 
the cosine function provides solutions to the
second-order abstract Cauchy problem
\[   u''(t) = Au, \quad \quad u(0)= x, \quad u'(0)=0.
\]
Conversely, if the abstract second order problem for an operator is
well-posed, then it gives rise to a cosine function. In this way,
cosine functions play the same role for the second order problem as semigroups
do for the first order problem. We refer to \cite[Chapter 3.14-3.16]{ABHN}
for more on the theory of cosine functions.

\medskip

An important example of a cosine function arises as
\[ \Cos(t) = \frac{1}{2} ( U(t) + U(-t))\quad \quad (t\in \R),
\]
where $U$ is a $C_0$-group. The generator $A$ of the cosine function
and the generator $B$ of the group are then related by $A = B^2$.
(As an example consider $B=\diff{}/\diff{t}$ 
the generator of the shift group on 
$\Ell{2}(\R)$; then $A$ is the one-dimensional Laplacian.)
It is natural to ask which cosine functions arise in this manner, but
in general there is little hope. 
Indeed, there is no way in general to reconstruct a group $U$ from 
its associated cosine function.
Taking squares deletes information that cannot be recovered. However,
as $B$ is a square root of $A$, one might look at $i(-A)^{1/2}$,
whenever $-A$ is sectorial. (The minus sign is natural here, since
the spectrum of $A$ extends to the left.) Let us call the group
generated by $i(-A)^{1/2}$, if it exists, the {\em square root reduction
group} associated with the original cosine function.
It turns out that  in general the square root reduction group does not exist, 
but only due to a shortcoming of the Banach space.
Indeed, \name{Fattorini} \cite{Fat69b} has shown the following result.

\medskip

\noindent{\bf Theorem.}\ 
{\slshape Let $A$ be the generator of a cosine function on a UMD space $X$.
If $-A$ is also sectorial, then $i(-A)^{1/2}$ generates a $C_0$-group.}

\medskip

A proof adapted from the orginal one is in \cite[3.16.7]{ABHN}.
More recently, functional calculus methods have been used to give a different
proof (see \cite{Haa06dpre} and combine it with \cite[Proposition 3.17]{HaaFC}
and standard perturbation). However, the approach was via the so-called
phase space and a direct functional calculus proof is still to be found.

\medskip

There is another issue here. Fattorini's theorem is qualitative in nature
and tells us nothing about the growth properties of the reduction group, depending
on the growth of the cosine function. In particular, it has been an open
problem for a long time 
whether the reduction group associated with a {\em bounded} cosine function is
itself bounded. On Hilbert spaces this is known to be true \cite{Fat70/71},
but the methods used are typical for Hilbert spaces, finding
self-adjointness by introducing an equivalent scalar product. 
To the bst of our knowlege, the last serious attempt to solve 
the problem on a general UMD space
was made by \name{Cioranescu} and \name{Keyantuo} in \cite{CioKey01}.
The present paper solves the problem in the affermative. 

\begin{thm}\label{int.t.main}
Let $A$ generate a bounded cosine function on a UMD space. Then
$i(-A)^{1/2}$ generates a bounded $C_0$-group.
\end{thm}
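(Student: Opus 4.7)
The candidate bounded group is
\[
U(t) := \Cos(t) + i\Sin(t), \qquad \Sin(t) := \sin(t(-A)^{1/2}),
\]
where the sine operator is understood via the holomorphic functional calculus of $(-A)^{1/2}$; boundedness of $\Cos$ forces $\sigma(A) \subset (-\infty, 0]$, so $(-A)^{1/2}$ is well-defined. With $\Cos$ already bounded by hypothesis, the theorem reduces to showing
\[
\sup_{t\in \R} \norm{\Sin(t)}_{\Lin(X)} < \infty,
\]
after which the functional-calculus identity $\expo{itr} = \cos(tr) + i\sin(tr)$ turns $U$ into a bounded $C_0$-group with generator $i(-A)^{1/2}$.

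The first main ingredient is a \emph{transference principle for bounded cosine functions}, in the spirit of Coifman--Weiss and Berkson--Gillespie--Muhly. I would aim for an inequality of the form
\[
\left\| \int_\R k(s)\, \Cos(s)\, \diff{s} \right\|_{\Lin(X)} \;\le\; C\, \norm{T_k}_{\Lin(\Ell{p}(\R;X))},
\]
valid for sufficiently regular even $k$, with $C$ depending only on $\sup_t \norm{\Cos(t)}$ and $T_k$ a convolution-type multiplier on $\Ell{p}(\R;X)$ built from the symmetrization of $k$ (the evenness reflecting $\Cos(-s) = \Cos(s)$). The derivation transplants the cosine action to a slab of $\Ell{p}(\R;X)$ via the d'Alembert identity $\Cos(t+s) + \Cos(t-s) = 2\Cos(t)\Cos(s)$, after which the standard averaging-and-truncation machinery applies; any $1<p<\infty$ is admissible since $X$ is UMD.

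Second, $\Sin(t)$ would be represented as a Hilbert transform of $\Cos$. The scalar identity $\tfrac{1}{\pi}\,\PV\!\int_\R \cos(sr)/(t-s)\, \diff{s} = \sin(tr)$ for $r>0$ leads to the operator formula
\[
\Sin(t)\, x \;=\; \frac{1}{\pi}\, \PV\!\int_\R \frac{\Cos(s)\,x}{t-s}\, \diff{s},
\]
which I would verify first on the dense subspace of vectors of the form $\phi((-A)^{1/2})y$ with $\phi \in \Holc$ via Fubini and the functional calculus, and then extend. The multiplier $T_k$ that shows up is, up to translation by $t$, just the scalar Hilbert transform; because $X$ is UMD it acts boundedly on $\Ell{p}(\R;X)$ with norm \emph{independent} of $t$. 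Plugging this into the cosine transference estimate gives $\sup_t \norm{\Sin(t)}_{\Lin(X)} < \infty$, closing the argument.

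The principal obstacle is the cosine transference itself: a cosine function is only ``half'' of a $C_0$-group, and the d'Alembert identity is a considerably weaker substitute for the group law, so producing a transference inequality robust enough to absorb a singular convolution kernel like $\PV\, 1/(t-s)$ --- rather than mere $\Ell{1}$ data --- requires careful tracking of parities and of the truncation/averaging procedure. A subsidiary technical point is the identification of a dense subspace on which the principal-value representation of $\Sin(t)$ can be established rigorously and then propagated to all of $X$ by the transference bound.
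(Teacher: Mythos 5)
Your proposal follows, in all essentials, the paper's \emph{second} approach (Section~\ref{s.rep}): the Euler decomposition $U(s)=\Cos(s)-iS(s)$ with $S(s)$ the sine operator, the principal-value (Hilbert-transform) representation of $S(s)$ in terms of the cosine function (Theorem~\ref{rep.t.rep}, a formula going back to Fattorini), and the combination of a cosine transference principle (Theorem~\ref{tp.t.tpbc}) with the UMD hypothesis to bound the truncated singular integrals uniformly. Be aware that the paper's headline proof is different and avoids the singular kernel altogether: there $U(s)$ is obtained as the boundary value $\lim_{\alpha\searrow 0}T_B(\alpha+is)$ of the bounded holomorphic semigroup $T_B(\lambda)=[e^{-\lambda\abs{t}}](B)$, and the uniform multiplier bound comes from the elementary symbol identity $e^{-\lambda\abs{t}}=e^{-\alpha\abs{t}}\bigl(\cos(st)+h(t)\sin(st)\bigr)$ with $h(t)=-i\sgn t$, so that only Fourier transforms of measures and one application of the Hilbert transform enter. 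Your route buys the representation formula (\ref{rep.e.rep}) as a by-product; the paper's primary route buys a much tamer multiplier analysis.

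Two points in your sketch need genuine repair. First, you define $\sin\bigl(t(-A)^{1/2}\bigr)$ ``via the holomorphic functional calculus of $(-A)^{1/2}$''; but $(-A)^{1/2}$ is sectorial of angle $0$, and $\sin(tz)$ grows exponentially on every sector of positive angle, so it lies in $H^\infty$ of no sector. Making this definition honest requires regularization or the strip-type calculus (the paper notes $A^{1/2}$ is strip-type of height $0$ in Lemma~\ref{fc2.l.stsec}), and in any case the theory of fractional powers up front, together with a proof of the compatibility $\Cos(s)=\cos\bigl(s(-A)^{1/2}\bigr)$ between the \emph{given} cosine function and the abstract calculus --- this is not automatic. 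The paper sidesteps both issues by building a Phillips-type calculus $\Phi$ directly from $\Cos$ (Section~\ref{s.fc1}), for which $\Phi(\cos(s\,\cdot))=T_{(\delta_s+\delta_{-s})/2}=\Cos(s)$ holds by construction, setting $B=\Phi(\abs{t})$, and identifying $B=A^{1/2}$ only at the very end (Section~\ref{s.fc2}). Second, the parity problem you flag is real and cannot be waved away by ``translation'': the transference inequality holds only for \emph{even} measures, while the truncated kernels $\car_{a\le\abs{r}\le b}(r)/r$ are odd, so they cannot be fed into Theorem~\ref{tp.t.tpbc} directly. The paper reroutes through the even symbols $G_{s,c}(t)=\sgn(t)\sin(st)F(ct)$ of Lemma~\ref{rep.l.FGH}; their multiplier norms are bounded uniformly in $s$ and $c$ by the UMD property ($\sgn\in\Mlt_2(\R;X)$), Bernstein's lemma ($F\in\Fourier(\Ell{1}(\R))$), and the modulation/dilation invariance of multiplier norms (part c) of Lemma~\ref{umd.l.fmp}) --- uniformity in $s$ comes from modulation invariance of the symbol, not translation invariance of the operator. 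Neither repair changes your architecture; they are precisely what Sections~\ref{s.fc1}, \ref{s.rep} and \ref{s.fc2} of the paper supply.
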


To avoid many minus signs, it is convenient to change notation a little
and write $A$ instead of $-A$. Moreover, we shall prove Theorem \ref{int.t.main}
in two steps according to the  following equivalent version:

\setcounter{thm}{0}
\begin{thm}[{\bf Alternative Version}]\label{int.t.alt}
Let $-A$ be the generator of a bounded cosine function $\Cos$ on a UMD space $X$. 
Then there exists a bounded $C_0$-group $U$ such that 
\[ \Cos(s)  = \frac{1}{2} (U(s) + U(-s)) \quad \quad (s\in \R)
\]
and $-iA^{1/2}$ is the generator of $U$.
\end{thm}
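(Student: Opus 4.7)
Motivated by the target $U(s) = \expo{-\imun s A^{1/2}}$, I propose to \emph{define} $U$ explicitly by
\[
  U(s) := \Cos(s) - \imun A^{1/2}\Sin(s), \qquad \Sin(s) := \int_0^s \Cos(t)\,\ud t.
\]
At the scalar spectral level ($A \leadsto \mu \ge 0$) this reproduces $\cos(s\mu^{1/2}) - \imun \sin(s\mu^{1/2})$, so that the even--odd decomposition $\tfrac12(U(s)+U(-s)) = \Cos(s)$ holds by construction and the correct generator is forced. The whole difficulty is therefore concentrated in a single point: to show that $A^{1/2}\Sin(s)$ extends to a bounded operator on $X$ \emph{uniformly} in $s \in \R$.

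The crucial observation is a scalar identity. For $\nu \ge 0$, Fourier cosine inversion combined with the evenness of $\cos(\cdot\,\nu)$ gives
\[
  \sin(s\nu) \;=\; \frac{1}{\pi}\,\PV\!\int_\R \frac{\cos(t\nu)}{s-t}\,\ud t.
\]
Substituting $\nu = \mu^{1/2}$ and making the functional-calculus translation $\cos(t\,\cdot^{1/2}) \leadsto \Cos(t)$ suggests the operator-level formula
\[
  A^{1/2}\Sin(s)\,x \;=\; \frac{1}{\pi}\,\PV\!\int_\R \frac{\Cos(t)\,x}{s-t}\,\ud t,
\]
that is, $A^{1/2}\Sin(s)\,x$ should be the Hilbert transform (in $t$) of the $X$-valued function $t \mapsto \Cos(t)x$, evaluated at $s$.

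To turn the formal identity into an honest norm bound I invoke the transference principle for bounded cosine functions developed earlier in the paper. It dominates the $\Lin(X)$-norm of an operator of the form $\int k(t)\,\Cos(t)\,\ud t$ by the norm of the associated singular convolution on $L^p(\R;X)$, up to a constant depending only on $M = \sup_s\norm{\Cos(s)}$. Applied to (a regularization of) the kernel $k_s(t) = 1/(\pi(s-t))$, the relevant convolution is simply a translate of the Hilbert transform on $L^p(\R;X)$, whose norm is finite precisely because $X$ is UMD, and is evidently independent of the translation parameter $s$. This yields $\sup_{s\in\R}\norm{A^{1/2}\Sin(s)}_{\Lin(X)} < \infty$ and hence a uniform bound on $U(s)$.

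The remaining tasks are comparatively routine. Strong continuity of $s \mapsto U(s)x$ follows from that of $\Cos$ together with the continuity of $s \mapsto A^{1/2}\Sin(s)x$ for $x \in \dom(A)$ (where $A^{1/2}\Sin(s)x = \Sin(s)A^{1/2}x$), extended to all of $X$ by the uniform bound just obtained. The group law $U(s+t) = U(s)U(t)$ I would verify by expanding both sides through d'Alembert's equation \eqref{int.e.cos} and its sine analogue $\Sin(s+t) = \Sin(s)\Cos(t) + \Cos(s)\Sin(t)$, first on $\dom(A)$ where every manipulation makes literal sense, and then by density. Identification of the generator with $-\imun A^{1/2}$ is a Laplace-transform computation against \eqref{int.e.gen}. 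The principal obstacle I foresee lies in the third step: rigorously passing from the formal principal-value identity to the transference estimate. One must choose an approximating sequence of smooth kernels $k_s^{(\epsilon)}$ for which the transference principle directly applies, pass to the limit uniformly in $s$, and verify the limit operator is the extension of $A^{1/2}\Sin(s)$ initially defined on $\dom(A^{1/2})$. The UMD hypothesis is indispensable precisely here.
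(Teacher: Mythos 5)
Your proposal is correct in outline, but it is not really a new route: it is, almost step for step, the paper's own ``second approach'' of Section \ref{s.rep} (Theorem \ref{rep.t.rep}) --- the Euler decomposition $U(s) = \Cos(s) - \imun B\Sin(s)$, the principal-value representation of $B\Sin(s)$ as a Hilbert transform of $t \mapsto \Cos(t)x$, and the transference principle (Theorem \ref{tp.t.tpbc}) combined with the UMD property to obtain the uniform bound. By contrast, the paper's primary proof (Sections \ref{s.fc1}--\ref{s.umd}) uses a different approximation, namely boundary values of the holomorphic semigroup $T_B(\lambda) = \Phi(e^{-\lambda\abs{t}})$, whose symbols factor as $e^{-\alpha\abs{t}}(\cos(st) + h(t)\sin(st))$; that route avoids singular-kernel analysis altogether, which is what it buys over yours.

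Two places where your plan is thinner than it can afford to be. First, you define $U(s)$ directly through the sectorial square root $A^{1/2}$ and then ``insert'' $A^{1/2}$ into scalar identities involving $e^{-is\abs{t}}$ and $\sgn(t)\sin(st)$; but these functions are not bounded holomorphic functions on any sector, so the sectorial calculus cannot even formulate these insertions. The paper builds the Phillips-type calculus $\Phi$ of Section \ref{s.fc1} precisely for this purpose, works with $B := \Phi(\abs{t})$, and only afterwards (Section \ref{s.fc2}) proves $B = A^{1/2}$ and the compatibility of the two calculi; the steps you call routine (group law, strong continuity, identification of the generator) are exactly the ones that calculus makes routine, and doing them ad hoc on $\DD(A)$ is genuine extra work. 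Second, your claim that the regularized convolution is ``simply a translate of the Hilbert transform, whose norm is finite precisely because $X$ is UMD, and evidently independent of $s$'' glosses over the actual estimate: the regularizations are \emph{truncated} Hilbert transforms, and their boundedness uniformly in the truncation parameters and in $s$ is not the bare UMD/HT property; moreover the transference theorem applies to \emph{even} measures, which the translated kernel is not. The paper settles both issues at once by computing the cosine transform of the truncated integral, (\ref{rep.e.gen}), which produces the even symbols $\sgn(t)\sin(st)\left[F(bt) - F(at)\right]$, and then bounding these uniformly in $\Mlt_2(\R;X)$ using $F \in \Fourier(\Ell{1}(\R))$ (Lemma \ref{rep.l.FGH}), the dilation/modulation invariance of multiplier norms (Lemma \ref{umd.l.fmp}), and $\sgn \in \Mlt_2(\R;X)$ by UMD. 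You correctly flagged this as the principal obstacle; just be aware that this flag is where most of the remaining mathematics lives.
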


The purpose of this reformulation 
is to avoid the theory of sectorial operators
as long as possible. In fact, our  proof of the first part (the 
existence and boundedness of the group) is 
essentially self-contained, whereas for the second part
(the identification of the generator) 
one has to appeal to the well-established theory of fractional powers. 
However, we find it very instructive that this theory is not needed to 
obtain the existence and boundedness of the  
group reduction in the first place. 
Needless to say that we will not make use of Fattorini's original theorem at
any place.

The paper is structured as follows. In Section \ref{s.fc1} we construct a functional
calculus $\Phi$ in a Phillips type manner using integrals over the cosine function.
We show how this functional calculus can be interpreted in a canonical way
as a functional calculus for a certain unbounded operator $B$. The group $U$
will be given as $U(s)= \Phi(e^{-is\cdot}) = e^{-isB}$, $s\in \R$, and the
(uniform) boundedness of these operators is reduced to the uniform boundedness
of certain approximants. By means of a transference principle, which we
state and prove in Section \ref{s.tp}, one reduces this uniform boundedness
to the uniform boundedness of certain Fourier-multipliers  on the space 
$\Ell{2}(\R;X)$. In Section \ref{s.umd} we shortly provide the necessary
background on Fourier multipliers and the notion of UMD spaces, and finally
bring the different ingredients together to prove that the group $U$
is uniformly bounded. In Section \ref{s.rep} we present still another approach
to this result, using a different approximation (Theorem \ref{rep.t.rep}).
Finally, in Section \ref{s.fc2} we prove that the operator $B$ constructed
before is identical to the square root $A^{1/2}$ obtained from $A$ by means
of the sectorial functional calculus (or by the classical theory of fractional
powers). Moreover, we show that the functional calculus $\Phi$ is compatible
with and provides a proper extension of the sectorial functional calculus for 
$A^{1/2}$.

\vanish{
We shall provide two different 
routes for proving Theorem \ref{int.t.main}, 
both relying crucially on a transference theorem for cosine functions
(Theorem \ref{tp.t.tpbc}). 
The first (quicker) route follows the lines of \cite{CioKey01} 
where the reduction group is found as the boundary value of
the holomorphic semigroup generated by $-(-A)^{1/2}$. 
Besides the mentioned transference result, it uses 
the functional calculus for sectorial operators and the theory
of boundary values for holomorphic semigroups from \cite[Chapter~3.9]{ABHN}.
However, since the functions $f_t(z) = \expo{itz^{1/2}}, 0\not=t \in \R$, are not 
bounded on sectors, the sectorial functional calculus is actually 
not the right
framework to prove even the boundedness of the operators
$f_t(A)$ (let alone the uniform boundedness).  
So in the second part of the paper 
we provide a suitable holomorphic functional
calculus leading to a second proof of Theorem \ref{int.t.main}
and to a more perspicious approach to a known representation
formula (see Theorem \ref{rep.t.rep}). 
}

\bigskip

\noindent
{\em Definitions and Conventions}\\
We usually consider (unbounded) closed
operators $A,B$ on a complex Banach space $X$.  By $\Lin(X)$ we denote
 the set of all {\em bounded} (fully-defined) operators on $X$.
The domain and the range
of a general  operator $A$ are denoted by $\DD(A)$ and $\RR(A)$, respectively. 
Its resolvent is $R(\lambda,A) = (\lambda-A)^{-1}$, and $\resolv(A)$
denotes the set of $\lambda \in \C$ where $R(\lambda,A) \in \Lin(X)$.
Its complement $\sigma(A) = \C\ohne \resolv(A)$ is the {\em spectrum}.

For each open subset $\Omega \subset \C$
we denote by $H^\infty(\Omega)$
the Banach algebra of bounded holomorphic functions on $\Omega$. 
If $\Omega$ is an arbitrary locally compact space, then 
the set of complex regular Borel measures on $\Omega$ is 
denoted by $\eM(\Omega)$.
The {\em Fourier transform} of a tempered distribution $\Phi$ on $\R$ is 
denoted by $\Fourier(\Phi)$ or $\fourier{\Phi}$. We often write
$s$ and $t$ (in the Fourier image) to denote the real coordinate, e.g.
$\sin t/ t$ denotes the {\em function} $t \tpfeil  \sin t/ t$. 
On a complex domain we use $z$ as the coordinate, so that 
$f(z)$ denotes the function $z \tpfeil f(z)$.

Let $X$ be a Banach space and $p \in [1, \infty)$. 
For a finite measure $\nu \in \eM(\R)$ we
denote by
\[ L_\nu := (f \tpfeil f \ast \nu): \Ell{p}(\R;X) \to \Ell{p}(\R;X)
\]
the convolution operator on the $X$-valued $\Ell{p}$-space.

\section{A Functional Calculus}\label{s.fc1}

We suppose in this section that $-A$ is the generator of a bounded
cosine function $(\Cos(t))_{t\in \R}$ on the Banach space $X$. 
Note that as mentioned in the introduction
we consider $-A$ instead of $A$ in order to avoid many minus signs later.
In order to construct the reduction group, we shall built up 
a functional calculus for an yet unknown operator $B$ and only later 
(in Section \ref{s.fc2}) we will identify
this operator as $A^{1/2}$.

\medskip

The so-called {\em Phillips calculus} for $\Cos$ is the mapping
\[ (\mu \tpfeil T_\mu): \eM(\R) \pfeil \Lin(X)
\]
with $T_\mu$ being defined by
\[ T_\mu x := \int_{\R} \Cos(s)x\, \mu(\diff{s}) \quad\quad(x\in X, \mu \in \eM(\R)).
\] 
Since $\Cos$ is an even
function, the operator $T_\mu$ depends only on the {\em even part} 
$\mu_e$ of $\mu$, defined by 
\[ \mu_e(A) := \frac{1}{2}(\mu(A) + \mu(-A)) \quad \quad (A\in \Borel(\R)),
\]
and hence we may assume always that $\mu$ is an even measure. Thus we let
\[ 
\eM_e(\R) := \{ \mu \in \eM(\R) \suchthat \text{$\mu$ is even}\}
\]
and note that this is a closed subalgebra of the convolution algebra
$\eM(\R)$ of all bounded measures on $\R$.

\begin{prop}\label{fc1.p.cvl}
Let $(\Cos(s))_{s\in \R}$ be a bounded cosine function. Then the mapping
\[
(\mu \tpfeil T_\mu): \eM_e(\R) \pfeil \Lin(X)
\]
is a homomorphism of algebras.
\end{prop}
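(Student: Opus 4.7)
The plan is standard: I would verify well-definedness and linearity first, then extract multiplicativity from the cosine addition formula combined with the evenness hypothesis.

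First, the operators $T_\mu$ are well-defined, linear and bounded: since $s \mapsto \Cos(s)x$ is continuous and $\Cos$ is uniformly bounded by some $M \ge 1$, Bochner-integration against the finite measure $\mu$ makes sense and yields $\norm{T_\mu x} \le M \norm{\mu} \norm{x}$. The map $\mu \mapsto T_\mu$ is obviously linear, so only multiplicativity remains. Of course, $T_\mu$ only depends on $\mu_e$, so the restriction to $\eM_e(\R)$ is the natural setting.

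For multiplicativity, fix $\mu, \nu \in \eM_e(\R)$ and $x \in X$ and compute $T_\mu T_\nu x$ by inserting the definitions. Since the integrand $(s,t) \mapsto \Cos(s)\Cos(t)x$ is continuous and bounded by $M^2 \norm{x}$, Fubini's theorem applies and gives
\[
  T_\mu T_\nu x \;=\; \int_\R \int_\R \Cos(s)\Cos(t)\,x \; \mu(\diff{s})\,\nu(\diff{t}).
\]
Now I would invoke the defining identity \eqref{int.e.cos}, rewritten as $\Cos(s)\Cos(t) = \tfrac{1}{2}(\Cos(s+t) + \Cos(s-t))$, to split the double integral into two pieces. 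The first piece is, by the definition of convolution,
\[
  \tfrac{1}{2}\int_\R \int_\R \Cos(s+t)x\, \mu(\diff{s})\,\nu(\diff{t})
   \;=\; \tfrac{1}{2} \int_\R \Cos(u)x\,(\mu \ast \nu)(\diff{u}) \;=\; \tfrac{1}{2} T_{\mu\ast\nu}x.
\]
For the second piece, denote by $\nu^-$ the reflection of $\nu$, defined by $\nu^-(B) := \nu(-B)$. Then $\int \Cos(s-t)x \,\nu(\diff{t}) = \int \Cos(s+t)x\, \nu^-(\diff{t})$, so the same computation yields $\tfrac{1}{2} T_{\mu \ast \nu^-}x$. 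The hypothesis that $\nu$ is even means precisely $\nu^- = \nu$, so both pieces equal $\tfrac{1}{2} T_{\mu\ast\nu}x$, and adding them gives $T_\mu T_\nu x = T_{\mu\ast\nu}x$.

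There is no real obstacle here; the only subtleties are the justification of Fubini (handled by strong continuity and boundedness of $\Cos$ together with finiteness of $\mu$ and $\nu$) and the observation that the evenness hypothesis is exactly what is needed to collapse the two terms produced by the cosine addition formula into a single convolution. It is worth remarking that without evenness one would instead obtain $T_\mu T_\nu = \tfrac{1}{2}(T_{\mu\ast\nu} + T_{\mu\ast\nu^-})$, which corresponds to symmetrising $\mu \ast \nu$; restricting to $\eM_e(\R)$ bypasses this and yields a genuine algebra homomorphism.
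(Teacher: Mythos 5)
Your proof is correct and takes essentially the same route as the paper's: both apply the cosine addition formula to split $T_\mu T_\nu x$ into two double integrals and then use evenness of $\nu$ to identify the $\Cos(s-t)$-piece with the $\Cos(s+t)$-piece, your reflection identity $\nu^- = \nu$ being exactly the paper's change of variable $s \tpfeil -s$. The extra remarks on boundedness, Fubini, and the symmetrised formula $T_\mu T_\nu = \tfrac12(T_{\mu\ast\nu} + T_{\mu\ast\nu^-})$ in the non-even case are sound but do not alter the argument.
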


\begin{proof}
Let $\mu, \nu \in \eM_e(\R)$ and $x\in X$. We compute
\begin{align*}
T_\mu T_\nu x & = \int_\R \int_\R   \Cos(t)\Cos(s)x\, \nu(\diff{s})\,\mu(\diff{t})
\\ & =
 \frac{1}{2}\int_\R \int_\R \Cos(t+s)x\, \nu(\diff{s})\,\mu(\diff{t}) 
+
\frac{1}{2} \int_\R \int_\R \Cos(t-s)x\, \nu(\diff{s})\,\mu(\diff{t})
\\ & \stackrel{(*)}{=}   \int_\R \int_\R \Cos(t+s)x\, \nu(\diff{s})\,\mu(\diff{t})
\\ &=  \int_\R \Cos(s)x \, (\mu \ast \nu)(\diff{s}) = T_{\mu \ast \nu} x.
\end{align*}
In equality $(*)$ we performed
a change of variable $s \tpfeil -s$ in the second integral and
used that $\nu$ is an even measure.
\end{proof}

For $\mu \in \eM(\R)$ we define its {\em cosine transform} by 
$\CT\mu := \Fourier(\mu_e)$, i.e.
\[ (\CT\mu)(t) =  \int_\R \cos(st)\, \mu(\diff{s}) 
\quad \quad (t\in \R)
\]
Evidently, if $\mu$ is even then $\CT\mu$ coincides 
with the Fourier transform of $\mu$. Moreover, $\CT\mu$ is always an even function,
hence is determined by its restriction to $\R_+$. Therefore often
we shall not distinguish between a function defined on $\R_+$ and its
even extension to $\R$. Let
\[ \calE(\R_+) := \{ \CT\mu \suchthat \mu \in \eM(\R)\} 
= \{ \Fourier(\mu) \suchthat \mu \in \eM_e(\R)\}
\]
Then $\calE(\R_+)$ is an algebra with respect to pointwise multiplication of 
functions. If $f = \CT\mu \in \calE(\R_+)$ we define $\Phi(f) := T_\mu$, 
which is a good definition since the Fourier transform is injective and 
$T_\mu= T_{\mu_e}$. The mapping
\[ \Phi : \calE(\R_+) \pfeil \Lin(X)
\]
is a homomorphism of algebras,
by Proposition \ref{fc1.p.cvl} and the product law of the Fourier transform.
Note that for $\lambda > 0$ and 
$\mu := (1/2)e^{-\lambda \abs{s}}\diff{s}$ we have 
\[ f(t) := \CT(\mu)(t) = \fourier{\mu}(t) = \frac{\lambda}{\lambda^2 +t^2}
\quad \quad (t\in \R)
\]
and
\[ \Phi(f) = T_\mu = \int_0^\infty e^{-\lambda s}\Cos(s)\, \diff{s} =
\lambda R(\lambda^2, -A) = \lambda(\lambda^2 + A)^{-1}.
\]
This is clearly an injective operator. Hence $(\calE, \Phi)$ is a proper
primary functional calculus, and we may choose freely any superalgebra
$\calF$ of $\calE$ to obtain a proper abstract functional calculus
in the sense of \cite[Chapter 1]{HaaFC}. We might take for example
the algebra of all functions from $\R_+$ to $\C$. A function 
$f : \R_+ \pfeil \C$ is called {\em regularizable} if there is
$e\in \calE(\R_+)$ such that $ef \in \calE(\R_+)$ as well and $\Phi(e)$ is
injective. In this case, $e$ is called a {\em regularizer} for $f$, and 
the (closed but potentially unbounded) operator $\Phi(f)$ is defined
as 
\[ \Phi(f) := \Phi(e)^{-1} \Phi(ef).
\]
This definition does not depend on the chosen regularizer and is also
compatible with the definition of $\Phi$ on the algebra $\calE(\R_+)$;
moreover, it obeys the standard rules for unbounded functional calculi.
See \cite[Chapter 1]{HaaFC} for proofs of these facts and more information. 

\medskip

To identify regularizable functions, we recall the well-known Bernstein lemma.

\begin{lemma}\label{fc1.l.bern}
Let $f \in H^1(\R)$, i.e.~$f, f' \in \Ell{2}(\R)$. Then $f \in \Fourier(\Ell{1}(\R))$.
Moreover, the mapping $\Fourier^{-1}: H^1(\R) \pfeil \Ell{1}(\R)$ is continuous.
\end{lemma}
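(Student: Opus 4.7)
The plan is to prove the classical Bernstein lemma by a Cauchy--Schwarz argument after splitting the integral into the region near the origin and the region at infinity.

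First I would set $g := \Fourier^{-1}(f)$; since $f \in \Ell{2}(\R)$, the Plancherel theorem gives $g \in \Ell{2}(\R)$ with $\|g\|_{\Ell{2}} = \|f\|_{\Ell{2}}$ (up to the normalization constant). The key observation is that because $f' \in \Ell{2}(\R)$ as well, the function $s \tpfeil s \, g(s)$ also lies in $\Ell{2}(\R)$: indeed, $f' = \Fourier(i s \cdot g)$ in the distributional sense, so Plancherel yields $\|s \, g(s)\|_{\Ell{2}} = \|f'\|_{\Ell{2}}$.

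The goal is then to show $g \in \Ell{1}(\R)$ with a bound controlled by $\|f\|_{\Ell{2}} + \|f'\|_{\Ell{2}}$. I would split
\[
 \int_\R \abs{g(s)}\, \diff{s} = \int_{\abs{s}\le 1} \abs{g(s)}\, \diff{s} + \int_{\abs{s}>1} \abs{g(s)}\, \diff{s}
\]
and estimate each piece by the Cauchy--Schwarz inequality. For the first piece,
\[
 \int_{\abs{s}\le 1} \abs{g(s)}\, \diff{s} \le \sqrt{2}\, \|g\|_{\Ell{2}}.
\]
For the second, write $\abs{g(s)} = \abs{sg(s)}/\abs{s}$ and apply Cauchy--Schwarz against $1/s$:
\[
 \int_{\abs{s}>1} \abs{g(s)}\, \diff{s} \le \left(\int_{\abs{s}>1} \frac{\diff{s}}{s^2}\right)^{1/2} \|sg\|_{\Ell{2}} = \sqrt{2}\, \|sg\|_{\Ell{2}}.
\]
Combining and applying Plancherel, I obtain
\[
 \|g\|_{\Ell{1}} \le \sqrt{2}\bigl( \|f\|_{\Ell{2}} + \|f'\|_{\Ell{2}}\bigr),
\]
which simultaneously proves $g \in \Ell{1}(\R)$ and the continuity of $\Fourier^{-1}: H^1(\R) \pfeil \Ell{1}(\R)$.

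There is no real obstacle here; the only points requiring care are the Fourier convention (to identify $\Fourier(s g)$ with a constant multiple of $f'$) and the observation that the integral $\int_{\abs{s}>1} s^{-2}\, \diff{s}$ is finite, which is precisely what forces the splitting threshold at $\abs{s}=1$ and ultimately explains why one derivative in $\Ell{2}$ is already enough to land in $\Ell{1}$ on the Fourier side.
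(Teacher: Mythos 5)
Your proof is correct. The paper itself gives no argument for this lemma---it simply cites \cite[Lemma~8.2.1]{ABHN}---and your Cauchy--Schwarz splitting at $\abs{s}=1$ is precisely the standard proof of that cited result, so in substance you have reproduced the intended argument in self-contained form. The two points you flag (the normalization constant in $\Fourier(s\,g) = \pm i f'$, which is harmless since only $\abs{s g(s)}$ enters the estimate, and the finiteness of $\int_{\abs{s}>1} s^{-2}\,\diff{s}$) are indeed the only places requiring care; one might add the one-line remark that since $g \in \Ell{1}(\R)$, its Fourier transform is a genuine (continuous) function agreeing with $f$ almost everywhere, which is what the conclusion $f \in \Fourier(\Ell{1}(\R))$ formally asserts, and that $\norm{f}_{\Ell{2}} + \norm{f'}_{\Ell{2}} \le \sqrt{2}\,\norm{f}_{H^1}$ gives the stated continuity.
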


\begin{proof}
See  \cite[Lemma~8.2.1]{ABHN}.
\end{proof}

Here is a first application.

\begin{lemma}
Let $f \in \Ct{1}(\R_+)$ such that $f'$ is polynomially bounded. Then 
$f$ is regularizable, whence $\Phi(f)$ is defined.
\end{lemma}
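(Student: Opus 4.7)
The plan is to exhibit an explicit regularizer $e \in \calE(\R_+)$ for $f$. The only concrete element of $\calE(\R_+)$ we have been given is $e_1(t) := \lambda/(\lambda^2+t^2)$ for $\lambda > 0$, whose image $\Phi(e_1) = \lambda(\lambda^2+A)^{-1}$ is injective. Since $\calE(\R_+)$ is an algebra and $\Phi$ a homomorphism, every power $e_n := e_1^n$ lies in $\calE(\R_+)$ with $\Phi(e_n) = [\lambda(\lambda^2+A)^{-1}]^n$ still injective. Thus the problem reduces to showing $e_n f \in \calE(\R_+)$ for some $n$ large enough.

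For this I would extend $f$ to an even continuous function $\tilde f$ on $\R$ and apply Bernstein's lemma \ref{fc1.l.bern} to $h := e_n \tilde f$. Because $\tilde f$ is continuous, its distributional derivative is the a.e.-defined function $t \mapsto f'(|t|)\sgn(t)$, polynomially bounded by hypothesis; and $\tilde f$ itself is polynomially bounded of some degree $m$ (obtained by integrating the bound for $f'$). Since $e_n(t) = O(|t|^{-2n})$ and $e_n'(t) = O(|t|^{-2n-1})$ as $|t| \to \infty$, choosing $n > (2m+1)/4$ makes both $h$ and its distributional derivative
\[ h' = e_n' \tilde f + e_n \cdot f'(|\cdot|)\sgn(\cdot) \]
lie in $\Ell{2}(\R)$. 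Lemma \ref{fc1.l.bern} then provides $g \in \Ell{1}(\R)$ with $h = \Fourier(g)$; evenness of $h$ forces evenness of $g$, so $\mu := g\,\diff{t} \in \eM_e(\R)$ and $h = \CT\mu \in \calE(\R_+)$, as required.

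The only delicate point is the behavior at the origin: one must check that the jump of the one-sided classical derivatives of $\tilde f$ at $0$ produces no Dirac contribution to the distributional derivative of $\tilde f$. That is indeed so because $\tilde f$ itself is continuous across $0$, so only the classical (locally bounded) derivative contributes. Everything else is a routine decay computation, and the argument uses nothing beyond the stated polynomial boundedness of $f'$.
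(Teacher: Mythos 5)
Your proof is correct and takes essentially the same route as the paper's: your regularizer $e_n$ with $\lambda=1$ is exactly the paper's $(1+t^2)^{-n}$ (with $\Phi(e_n)=(1+A)^{-n}$ injective), and the core step is the same application of Bernstein's lemma after checking that the product and its distributional derivative lie in $\Ell{2}(\R)$. Your explicit verification that the even extension produces no Dirac contribution at the origin is a point the paper passes over silently, but it is the same argument, not a different approach.
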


\begin{proof}
As usual we view $f$ as an even function on $\R$.
The hypotheses imply that $f$ is polynomially bounded. 
Let $g(t) := f(t) (1+t^2)^{-n}$ for $n\in \N$ large
enough such that $g \in \Ell{2}(\R)$ and $f'(t)(1+t^2)^{-n} \in 
\Ell{2}(\R)$. Then 
\[  g'(t) = \frac{(1+t^2)^{n}f'(t) - n f(t)(1+t^2)^{n-1}}{(1+t)^{2n}} = 
\frac{f'(t)}{(1+t^2)^n} - 
\frac{n}{(1+t^2)} g(t)  
\]  
for $t \not= 0$. Hence $g' \in \Ell{2}(\R)$ and by Bernstein's lemma 
it follows that $g\in \Fourier(\Ell{1}(\R))$. As $g$ is even,
$g \in \calE(\R_+)$ and since $\Phi\big( (1 + t^2)^{-n}\big) = (1 + A)^{-n}$
is injective, $f$ is regularizable.
\end{proof}

Using this lemma one sees that the function $f(t) = \abs{t}$ is regularizable by
the function $(1  + t^2)^{-1}$, and  hence the operator
\[ B := \Phi(f) = \Phi(\abs{t})
\]
is defined. From the definition it is immediate that $\DD(A) \subset \DD(B)$,
and so $B$ is densely defined. Moreover, for $\lambda \notin \R_+$, the function
$(\lambda - \abs{t})^{-1}$ is in $H^1(\R)$, whence in $\calE(\R_+)$. Therefore
\[ (\lambda - B)^{-1} = \Phi\left( \frac{1}{\lambda - \abs{t}}\right) \in \Lin(X).
\] 
This shows that $\sigma(B) \subset \R_+$.

\medskip

It is reasonable to say that the functional calculus $\Phi$
is a functional calculus {\em for} $B$ and write $f(B)$ instead of 
$\Phi(f)$. In Section \ref{s.fc2} we shall show that
$B = A^{1/2}$, but that is unimportant at the moment.  We note
that also the functions $f_s, s\in\R$, defined by
\[ f_s(t) := e^{-is\abs{t}} \quad \quad (t\in \R)
\]
satisfy the conditions of the lemma. Hence we obtain the operators
\[ U(s) := \Phi(e^{-is\abs{t}}) = [e^{-is\abs{t}}](B) \quad \quad (s\in \R).
\]
Our main goal is to show
that if $X$ is a UMD space, then  $U$ 
is a bounded $C_0$-group on $X$. Functional calculus
then yields that indeed 
\[ \Cos(s) = [\cos(t)](B) = \frac{1}{2}(e^{-is\abs{t}} + e^{is\abs{t}})(B)
= \frac{1}{2}(U(s) + U(-s)) \quad \quad (s\in \R)
\]
and the first step in the proof of Theorem \ref{int.t.main} is complete.

\begin{lemma}
If $U(s)= [e^{-is\abs{t}}](B)$ is a bounded operator for every $s\in \R$, then
$(U(s))_{s\in \R}$ is a $C_0$-group and its generator 
is $-iB$.
\end{lemma}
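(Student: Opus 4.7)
The plan rests on three ingredients --- multiplicativity of $\Phi$, Bernstein's lemma, and a resolvent computation --- carried out in four steps.

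First, for the \textbf{group property}, I apply multiplicativity of $\Phi$ to the identity $e^{-is|t|}\cdot e^{-ir|t|} = e^{-i(s+r)|t|}$, all three functions being regularized by $(1+t^2)^{-1}$, to conclude $U(s)U(r) = U(s+r)$; and $U(0) = \Phi(1) = T_{\delta_0} = \Cos(0) = I$ since $1 = \CT(\delta_0) \in \calE(\R_+)$.

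Second, for \textbf{strong continuity on $\DD(A)$}, write $x = (1+A)^{-1}y$ and use multiplicativity to obtain
\[ U(s)(1+A)^{-1}y = \Phi\!\left(\frac{e^{-is|t|}}{1+t^2}\right) y = T_{\mu_s} y, \]
where $\mu_s \in \eM_e(\R)$ has cosine transform $\frac{e^{-is|t|}}{1+t^2}$. The plan is to verify that $s\mapsto \frac{e^{-is|t|}}{1+t^2}$ is continuous from $\R$ into $H^1(\R)$ (via $|e^{-is|t|}-e^{-ir|t|}|\le |s-r|\,|t|$ and dominated convergence on the function and its a.e.\ derivative); Bernstein's lemma then gives continuity of $s\mapsto \mu_s$ in $L^1(\R)$, and the Phillips bound $\|T_\mu\|\le M_0\|\mu\|_{L^1}$ (with $M_0 = \sup_t\|\Cos(t)\|$) upgrades this to operator-norm continuity of $s\mapsto U(s)(1+A)^{-1}$, proving $U(s)x\to x$ for all $x\in\DD(A)$.

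Third, for the \textbf{resolvent of $-iB$ and the extension to $X$}, Bernstein also shows $\frac{1}{\lambda+i|t|}\in\calE(\R_+)$ for every $\lambda>0$, and the algebraic identity $(\lambda+i|t|)\cdot\frac{1}{\lambda+i|t|}=1$ together with the calculus rules yields $R(\lambda,-iB) = \Phi(\frac{1}{\lambda+i|t|})$, with $\|nR(n,-iB)\|\le C$ uniformly (by scaling) and $nR(n,-iB)\to I$ strongly on $X$. Repeating the continuity argument of the previous paragraph with $R(\lambda,-iB)$ in place of $(1+A)^{-1}$ gives strong continuity of $U(\cdot)$ on the dense range $\DD(B) = \mathrm{Ran}(R(\lambda,-iB))$; then the commutation $nR(n,-iB)(U(s)x-x) = (U(s)-I)\,nR(n,-iB)x$, combined with the pointwise boundedness of $U(s)$ and the approximation $nR(n,-iB)\to I$, promotes this to strong continuity on all of $X$. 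Finally, the Laplace-transform identity $R(\lambda,-iB)=\int_0^\infty e^{-\lambda s}U(s)\,ds$ (verified on $\DD(A)$ through the calculus and extended by density) identifies $-iB$ as the generator.

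The \textbf{main obstacle} is the promotion of strong continuity from the dense subspace $\DD(B)$ to all of $X$: the calculus supplies operator-norm continuity of $s\mapsto U(s)\Phi(g)$ only for suitable regularizers $g$, and the pointwise boundedness of $U(s)$ is by itself too weak. The bridge is the commutation of $U(s)$ with $R(n,-iB)$ together with the strong convergence $nR(n,-iB)\to I$, which transfers the convergence from the dense range of the resolvent to the whole space.
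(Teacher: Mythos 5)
Your first, second, and fourth steps match the paper's proof: the group property via multiplicativity of $\Phi$, norm-continuity of $s\mapsto U(s)(1+A)^{-1}$ via Bernstein's lemma applied to $g_s(t)=e^{-is\abs{t}}(1+t^2)^{-1}$, and the Laplace-transform computation identifying the generator with $-iB$ are all exactly the paper's route. The genuine gap is in the step you yourself single out as the main obstacle: the promotion of strong continuity from a dense subspace to all of $X$. The commutation identity $nR(n,-iB)(U(s)x-x)=(U(s)-I)\,nR(n,-iB)x$ shows only that for each \emph{fixed} $n$ the left-hand side tends to $0$ as $s\to 0$. To conclude $U(s)x\to x$ you must estimate
\[
\norm{U(s)x-x}\le \norm{(U(s)-I)\,nR(n,-iB)x}+\norm{(U(s)-I)\bigl(I-nR(n,-iB)\bigr)x}
\]
and make the second term small \emph{uniformly in $s$ near $0$} by choosing $n$ large. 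But that term is only bounded by $(\norm{U(s)}+1)\,\norm{(I-nR(n,-iB))x}$, and at this stage nothing controls $\norm{U(s)}$ as $s\to 0$: the hypothesis gives each $U(s)$ bounded individually, not $\sup_{\abs{s}\le 1}\norm{U(s)}<\infty$. Local uniform boundedness is a consequence of strong continuity (via uniform boundedness), so assuming it here is circular --- indeed, uniform boundedness of the $U(s)$ is the hard content of the paper's main theorem, proved only later by transference. The required interchange of the limits $s\to 0$ and $n\to\infty$ is therefore unjustified, and the bridge collapses.

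What is missing is a measure-theoretic (or Baire-category) ingredient, and this is precisely how the paper closes the gap: since each $U(s)$ is bounded and $U(\cdot)x$ is continuous for $x$ in the dense set $\DD(A)$, every trajectory $U(\cdot)x$, $x\in X$, is a pointwise limit of continuous functions, hence measurable; the classical Hille--Phillips theorem (a measurable one-parameter group of bounded operators is strongly continuous, \cite[Theorem 10.2.3]{HilPhi}) then finishes the proof. The content of that theorem is exactly the category/measure argument that manufactures local boundedness out of measurability plus the group law --- the ingredient your resolvent-approximation trick cannot replace. If you wish to avoid the citation, the elementary repair runs as follows: each trajectory $U(\cdot)x$ is of Baire class $1$, hence has a point of continuity $s_0$; then
\[
\norm{U(h)x-x}=\norm{U(-s_0)\bigl(U(s_0+h)x-U(s_0)x\bigr)}\le\norm{U(-s_0)}\,\norm{U(s_0+h)x-U(s_0)x}\to 0
\]
as $h\to 0$, because $U(-s_0)$ is a single bounded operator; the group law then gives continuity at every $s$. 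Either way, the passage to all of $X$ needs the group structure plus a category argument, not the approximation $nR(n,-iB)\to I$.
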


\begin{proof}
Suppose that the hypothesis of the lemma holds true. General functional calculus
theory yields that $U$ is a group. 
To prove that $U$ is strongly continuous
by classical semigroup
theory \cite[Theorem 10.2.3]{HilPhi} it suffices to show that the trajectories 
$U(\cdot)x$, $x\in X$,  are all measurable. Since $\DD(A)$ is dense in $X$, it
suffices to show that $U(\cdot)x$ is continuous for each $x\in \DD(A)$. 
Hence we consider the functions
\[ g_s(t) := \frac{e^{-is\abs{t}}}{1+t^2} \quad \quad (t, s\in \R)
\]
and by Bernstein's lemma  it suffices to show
that $(s\tpfeil g_s): \R \pfeil H^1(\R)$ is continuous. This is easy to see.

Now, let $-C$ be the generator of $U$.
To prove that $iB = C$, note that $\norm{g_s}_{H^1} = O(\abs{s})$.
Hence one can take Laplace transforms within $H^1(\R)$ and obtains
for large $\lambda > 0$
\begin{align*}
 (1 + A)^{-1}(\lambda + C)^{-1}
& =
\int_0^\infty e^{-\lambda s}U(s)(1+A)^{-1}\, \diff{s}
=
\int_0^\infty e^{-\lambda s} \Phi(g_s)\,\diff{s}
\\ & = 
\Phi\left( \int_0^\infty e^{-\lambda s} g_s\, \diff{s}\right)
= 
\Phi\left(\frac{1}{(1+t^2)(\lambda + i\abs{t})}\right)
\\ & = (1+A)^{-1} (\lambda + iB)^{-1}.
\end{align*}
This shows that $C = iB$.
\end{proof}

Finally, we state a ``convergence lemma'' for our functional calculus.

\begin{lemma}\label{fc1.l.cl}
Let $(f_\alpha)_\alpha$ be a net of continuous functions on $\R_+$ converging
pointwise to a function $f$ and
satisfying the following conditions:
\begin{aufziii}
\item $f_\alpha/(1+t^2) \in H^1(\R)$ for all $\alpha$.
\item $f_\alpha/(1+t^2) \to  f/(1+t^2) $ within $H^1(\R)$.
\end{aufziii}
Then $f_\alpha(B)x \to f(B)x$ for all $x\in \DD(A)$. If in addition
\begin{aufziii}
\setcounter{aufzi}{2}
\item $\sup_\alpha \norm{f_\alpha(B)} < \infty$,  
\end{aufziii}
then $f(B) \in \Lin(X)$ and $f_\alpha(B) \to f(B)$ strongly. 
\end{lemma}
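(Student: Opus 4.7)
The regularizer in play is $e(t) := 1/(1+t^2)$, for which $\Phi(e) = (1+A)^{-1}$ is injective. The plan is to translate the $H^1$-convergence of $ef_\alpha$ into norm-convergence of $\Phi(ef_\alpha)$ in $\Lin(X)$, then push this through the definition of $\Phi(f_\alpha) = \Phi(e)^{-1}\Phi(ef_\alpha)$ on the core $\DD(A)$, and finally use the uniform bound (iii) together with density of $\DD(A)$ to extend to all of $X$ and identify the limit as $f(B)$.

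First, by Bernstein's lemma the Fourier inverse $\Fourier^{-1}\colon H^1(\R)\to \Ell{1}(\R)$ is continuous. Since $ef_\alpha$ and $ef$ are even and lie in $H^1(\R)$, we have $ef_\alpha = \Fourier(g_\alpha)$ and $ef = \Fourier(g)$ for even $g_\alpha,g\in \Ell{1}(\R)$ with $g_\alpha\to g$ in $\Ell{1}(\R)$. Hence
\[
 \norm{\Phi(ef_\alpha) - \Phi(ef)} = \norm{T_{(g_\alpha - g)\,\diff{s}}} \le \norm[\infty]{\Cos}\,\norm[\Ell{1}]{g_\alpha - g} \longrightarrow 0,
\]
i.e.\ $\Phi(ef_\alpha)\to \Phi(ef)$ in the operator norm.

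Next, fix $x\in \DD(A)$ and write $y := (1+A)x$, so that $x = \Phi(e)y$. Using that $\Phi(e)$ commutes with $\Phi(ef_\alpha)$ (both come from the Phillips calculus) one checks $\Phi(ef_\alpha)x = \Phi(e)\Phi(ef_\alpha)y$, which shows $\Phi(ef_\alpha)x \in \DD(A)$ and
\[
 f_\alpha(B)x \,=\, \Phi(e)^{-1}\Phi(ef_\alpha)x \,=\, \Phi(ef_\alpha)(1+A)x,
\]
and analogously $f(B)x = \Phi(ef)(1+A)x$. Subtracting and using the first step yields $f_\alpha(B)x \to f(B)x$ in norm, which is the first claim.

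Finally assume (iii), with $M := \sup_\alpha\norm{f_\alpha(B)} < \infty$. For arbitrary $x\in X$ choose $x_n\in \DD(A)$ with $x_n\to x$; the estimates $\norm{f_\alpha(B)(x-x_n)} \le M\norm{x-x_n}$ combined with convergence on $\DD(A)$ give, by a standard $3\epsilon$-argument, that $(f_\alpha(B)x)_\alpha$ is Cauchy in $X$. Denote its limit by $Tx$; then $T\in \Lin(X)$ with $\norm{T}\le M$ and $f_\alpha(B)\to T$ strongly. To identify $T$ with $f(B)$, note that since each $f_\alpha(B)$ is bounded, the functional calculus product rule gives $\Phi(e)f_\alpha(B) = \Phi(ef_\alpha)$. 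Letting $\alpha$ tend along the net, the left-hand side converges strongly to $\Phi(e)T$ and the right-hand side converges in norm to $\Phi(ef)$, hence $\Phi(e)T = \Phi(ef)$. Injectivity of $\Phi(e)$ yields $T = \Phi(e)^{-1}\Phi(ef) = f(B)$, so $f(B)\in \Lin(X)$ and $f_\alpha(B)\to f(B)$ strongly.

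The only delicate point is the last identification step: one must know that the functional calculus product rule $\Phi(e)\Phi(f_\alpha) = \Phi(ef_\alpha)$ holds as a bona fide operator identity (which requires $f_\alpha(B)$ to be bounded, hence the use of hypothesis (iii)), and then use injectivity of $\Phi(e)$ to cancel it. Everything else is routine, relying on Bernstein's lemma and the norm-continuity of the Phillips calculus.
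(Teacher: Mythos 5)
Your proof is correct and follows essentially the same route as the paper: Bernstein's lemma converts hypotheses 1)--2) into norm convergence of $\Phi(ef_\alpha)=f_\alpha(B)(1+A)^{-1}$ (with $e(t)=(1+t^2)^{-1}$), which gives convergence on $\DD(A)$, and then density of $\DD(A)$ plus the uniform bound 3) finishes the argument. The paper compresses this into two lines, whereas you additionally spell out the identification of the strong limit with $f(B)$ via the product rule $\Phi(e)f_\alpha(B)=\Phi(ef_\alpha)$ and injectivity of $\Phi(e)$ --- a step the paper leaves implicit, and which your version handles correctly.
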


\begin{proof}
Since $\DD(A)$ is dense in $X$,
it suffices to show that 
\[
f_\alpha(B)(1+A)^{-1} \to f(B)(1+A)^{-1}
\]
in norm. This is guaranteed by conditions 1) and 2) and Bernstein's lemma. 
\end{proof}

Note that the hypotheses of Lemma \ref{fc1.l.cl} imply that 
$f_\alpha \to f$ uniformly on compact subsets of $\R_+$. On the other hand,
the hypotheses 1) and 2) of Lemma \ref{fc1.l.cl} are clearly satisfied if one has
the following situation:
\begin{aufziii}
\item $f\in \Ct{1}(\R_+)$ and $f_\alpha \in \Ct{1}(\R_+)$ for all $\alpha$;
\item $f_\alpha \to f$ and $f_\alpha' \to f'$ uniformly on compact subsets of $\R_+$;
\item $\sup_\alpha \norm{f_\alpha}_\infty + \norm{f_\alpha'}_\infty < \infty$.
\end{aufziii} 
A special case of this situation is given, for fixed $s\in \R$,  by the functions
\[   f_\alpha(t) := e^{-(\alpha + is)\abs{t}} ,\quad f(t) := e^{-is\abs{t}}\quad \quad 
(t\in \R, 0 < \alpha \le 1)
\] 
viewed as a net for  $\alpha \searrow 0$. Let us define
\[ T_B(\lambda) := \Phi(e^{-\lambda \abs{t}})= [e^{-\lambda \abs{t}}](B) \quad \quad 
(\re\lambda > 0).
\]
The following proposition shows that things behave as expected.

\begin{prop}\label{fc1.p.hsg}
Let $U$ and $T_B$ be defined as above. Then the following assertions hold.
\begin{aufzi}
\item $\displaystyle 
T_B(\lambda)x = \frac{\lambda}{\pi} \int_0^\infty \frac{\Cos(s)x}{\lambda^2 + s^2}
\, \diff{s}\quad \quad (x\in X, \re\lambda > 0)$.
\item $T_B$ is a bounded holomorphic semigroup of angle $\pi/2$. 
\item $-B$ is the generator of $T_B$.
\item For each $s\in \R$, $U(s)$ is a bounded operator if and only if
\[ \sup_{0< \alpha \le 1} \norm{T_B(\alpha + is)} < \infty,
\]
and in this case
$\quad U(s)x \,=\, \lim_{\alpha \searrow 0}  T_B(\alpha + is)x \quad \quad (x\in X)$.
\end{aufzi}
\end{prop}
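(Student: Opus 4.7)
\emph{Strategy.} My plan is to unpack what the calculus $\Phi$ already says about the function $e^{-\lambda\abs{t}}$ for $\lambda\in\HP$, and deduce each of the four assertions in turn.

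For (a), I would identify the even measure whose cosine transform equals $e^{-\lambda\abs{t}}$. The natural candidate is the complex Poisson-type measure
\[
 \mu_\lambda(\ud s) \,:=\, \frac{\lambda}{\pi(\lambda^2+s^2)}\,\ud s, \qquad \re\lambda>0,
\]
which is even and of finite total variation. For real $\lambda>0$, $\CT\mu_\lambda = e^{-\lambda\abs{\cdot}}$ is a classical Fourier pair; for complex $\lambda$ I would close a contour in the upper half-plane (picking up the single pole at $s=i\lambda$, whose residue produces $e^{-\lambda t}$ for $t>0$) and invoke evenness. Then $T_B(\lambda) = \Phi(e^{-\lambda\abs{t}}) = T_{\mu_\lambda}$ is exactly the integral representation in (a).

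For (b) and (c), the semigroup law $T_B(\lambda)T_B(\mu)=T_B(\lambda+\mu)$ is immediate from multiplicativity of $\Phi$ on $\calE(\R_+)$ applied to $e^{-\lambda\abs{t}}e^{-\mu\abs{t}} = e^{-(\lambda+\mu)\abs{t}}$, and holomorphy on $\HP$ follows by differentiating (a) under the integral (the integrand is locally dominated in $\lambda$ by an $L^1$ function of $s$). The key quantitative step is the sector estimate: after the substitution $s=\abs{\lambda}u$, formula (a) yields
\[
 \norm{T_B(\lambda)} \;\le\; \frac{M}{\pi}\int_\R \frac{\ud u}{\abs{e^{2i\theta}+u^2}},\qquad \theta:=\arg\lambda,\ M:=\sup_{t\in\R}\norm{\Cos(t)},
\]
and since $\abs{e^{2i\theta}+u^2}^2 = u^4+2u^2\cos 2\theta+1$ stays bounded away from $0$ uniformly in $u$ as long as $\abs{\theta}\le \omega'<\pi/2$, the integral is finite and continuous in $\theta$ on each proper subsector; strong continuity at $0$ within sectors comes from the Poisson-kernel interpretation of $\mu_\lambda$ (approximate identity applied to the continuous trajectory $\Cos(\cdot)x$). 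For (c), the Bochner integral $\int_0^\infty e^{-\mu s}e^{-s\abs{\cdot}}\,\ud s$ converges in $H^1(\R)$ to $1/(\mu+\abs{\cdot})$, so Bernstein's lemma legitimates interchanging $\int$ and $\Phi$:
\[
 \int_0^\infty e^{-\mu s}\,T_B(s)\,\ud s \;=\; \Phi\!\left(\tfrac{1}{\mu+\abs{\cdot}}\right) \;=\; (\mu+B)^{-1},
\]
using the resolvent formula for $B$ established immediately before the proposition. This is the defining Laplace-transform identity of the generator, so $-B$ generates $T_B$.

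For (d), the net $f_\alpha(t):= e^{-(\alpha+is)\abs{t}}$ converges pointwise to $f(t):= e^{-is\abs{t}}$ as $\alpha\searrow 0$, with $\abs{f_\alpha}\le 1$ and $L^2$-bounded, pointwise-convergent classical derivatives. Dominated convergence therefore gives conditions (i) and (ii) of Lemma \ref{fc1.l.cl} for $f_\alpha/(1+t^2)$, and under the uniform bound (iii) the convergence lemma delivers $U(s)\in\Lin(X)$ together with the claimed strong limit $T_B(\alpha+is)\to U(s)$. Conversely, multiplicativity in the extended calculus applied to $e^{-(\alpha+is)\abs{t}} = e^{-\alpha\abs{t}}\cdot e^{-is\abs{t}}$ yields $T_B(\alpha+is) = T_B(\alpha)\,U(s)$; combined with $\sup_{0<\alpha\le 1}\norm{T_B(\alpha)}<\infty$ from (b), boundedness of $U(s)$ forces the uniform bound. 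The most delicate step I anticipate is the sector integral estimate in (b); a secondary care-point is justifying the product rule $T_B(\alpha+is) = T_B(\alpha)U(s)$ inside the (possibly unbounded) extended calculus when $U(s)$ is only assumed bounded.
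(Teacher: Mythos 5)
Your proposal is correct and follows essentially the same route as the paper: the Poisson-kernel identification $\Fourier^{-1}(e^{-\lambda\abs{t}}) = \lambda/\pi(\lambda^2+s^2)$ for (a), the scaling estimate showing $\norm{g_\lambda}_{\Ell{1}}$ depends only on $\arg\lambda$ for (b), the Laplace-transform identity $\int_0^\infty e^{-zr}T_B(r)\,\diff{r} = (z+B)^{-1}$ for (c), and the factorization $T_B(\alpha+is)=U(s)T_B(\alpha)$ combined with the convergence lemma for (d). The only cosmetic difference is in (c), where you take the Laplace transform in $H^1(\R)$ and invoke Bernstein's lemma while the paper works directly in $\Ell{1}(\R)$; both are valid, and your ``care-point'' about the product rule in (d) is exactly what the paper's appeal to abstract functional calculus (\cite[Chapter 1]{HaaFC}) settles.
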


\begin{proof}
a) follows from the fact that
\[ \Fourier^{-1}(e^{-\lambda\abs{t}})(s) = \frac{\lambda}{\pi(\lambda^2 + s^2)}
=: g_\lambda(s) \quad \quad (s\in \R).
\]
b) follows since the mapping
\[ (\lambda \tpfeil g_\lambda): \{\re\lambda > 0\} 
\pfeil \Ell{1}(\R) 
\]
is holomorphic and 
\[ \norm{g_\lambda}_{\Ell{1}} = \frac{2}{\pi} \int_0^\infty 
\frac{1}{\abs{(\lambda/\abs{\lambda})^2 + s^2 }}\,\diff{s} 
\quad \quad (\re\lambda > 0).
\]
c)\ Note that $\norm{g_r}_{\Ell{1}} = 1$ for all $r > 0$. Hence one may
take Laplace transforms in $\Ell{1}$ and this shows that
\[ \int_0^\infty e^{-zr}T_B(r)\, dr  = \Phi\left(
\int_0^\infty e^{-zr}e^{-rt}\, \diff{r}\right)
= \Phi\left(\frac{1}{z+t}\right) = (z + B)^{-1}
\]
for $z > 0$. 

d).\ By abstract functional calculus we have
\[ T_B(\alpha + is) = [e^{-\alpha \abs{t}} e^{-is\abs{t}}](B) = U(s)T_B(\alpha)
\]
Since $T_B$ is a bounded semigroup, if $U(s)$ is bounded then the operators
$T_B(\alpha + is)$, $\alpha > 0$ are uniformly bounded. Conversely, supposing that 
these operators are uniformly bounded one can apply the convergence lemma
(Lemma \ref{fc1.l.cl})
and the remarks following it. 
\end{proof}

As a consequence we note that to prove the first part of Theorem \ref{int.t.main}
we only have to establish the uniform boundedness
\[ \sup \{ \norm{T_B(\lambda)} \suchthat \re\lambda > 0\} < \infty.
\]
This will be done with the help of a transference principle, which
is the topic of the following section.

\begin{rem}
The idea to reduce the proof of Theorem \ref{int.t.main} to the uniform boundedness
of the operator family $(T_B(\lambda))_{\re\lambda> 0}$ is taken from 
the paper \cite{CioKey01} by \name{Cioranescu} and \name{Keyantuo}. These authors
employ the general theory of boundary values of holomorphic semigroups
as it is presented in \cite[Section 3.9]{ABHN}. That theory can be
incorporated into the general theory of functional calculus, but doing so here
would certainly take us too far astray. We decided to give an ad hoc
proof based on functional calculus methods in order to keep the
presentation as self-contained as possible and to demonstrate
once more the power of functional calculus theory.
\end{rem}

\section{The Transference Result}\label{s.tp}

Let us begin with some abstract considerations. Suppose one is given an
operator $T$ on a Banach space $X$ and wants to estimate its norm.
{\em Transference} means that one factorises the ``bad'' operator $T$ over a second Banach space
$Y$ via mappings $\iota: X \pfeil Y$, $S: Y \pfeil Y$ and $P: Y \pfeil X$, i.e.,
$ T = P S \iota$.
This means that the diagram
\[ \xymatrix{
Y  \ar@{->}[r]^{S}& Y\\
X\ar@{->}[u]^{\iota}\ar@{->}[r]^{T} & X\ar@{<-}[u]_{P}
}
\]
commutes. The operator $S$ is hopefully ``better'' than $T$ in the sense that
one has reasonable estimates on its norm. The factorisation 
leads to estimates of the form $\norm{T} \le c \norm{S}$.
It is possible in certain cases to keep $S$ fixed while varying $\iota, P$, thereby
improving on $c$.

A classical example is the transference principle by \name{Coifman} and \name{Weiss}
\cite{CoiWei,CoiWei77} in its
abstract form given by \name{Berkson}, \name{Gillespie} and \name{Muhly} \cite{BerGilMuh89b}.
It has the form
\[ \norm{ \int_\R U(s)x\, \mu(\diff{s})}\le M^2 
\norm{L_\mu}_{\Lin(\Ell{p}(\R;X))}
\quad \quad (\mu \in \eM(\R))
\]
where $U$ is a bounded $C_0$-group on a Banach space $X$,
$M := \sup_{s\in \R} \norm{U(s)}$ is its bound, and $L_\mu$ denotes
the convolution operator
\[  L_\mu := ( f \tpfeil \mu \ast f)
\]
on each space where it is meaningful. Such an estimate is particularly useful
if the Banach space is a UMD space, because then one can use Fourier multiplier
theory to estimate the norm of $L_\mu$. 
We aim at the analogous result when the group $U$ is replaced by a cosine
function.

\begin{thm}\label{tp.t.tpbc}
Let $(\Cos(t))_{t\in \R}$ be a bounded cosine function on a Banach space $X$, and let
$T_\mu$ be defined by
\[ T_\mu x = \int_{\R} \Cos(s)x\, \mu(\diff{s}) \quad \quad (x\in X, \mu \in \eM_e(\R)).
\]
Then 
\[ \norm{T_\mu} \le  5M^2 \norm{L_\mu}_{\Lin(\Ell{p}(\R;X))}
\quad \quad (\mu \in \eM_e(\R)),
\]
where $M := \sup_{s\in \R} \norm{\Cos(s)}$ and $p \in [1, \infty)$.
\end{thm}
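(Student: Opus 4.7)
The strategy is to adapt the classical Berkson--Gillespie--Muhly transference principle for bounded $C_0$-groups to the cosine setting. After a standard density reduction to the case that $\mu$ is even with compact support (say $\supp(\mu)\subseteq[-N,N]$), the plan is to construct a test function in $\Ell{p}(\R;X)$ on which the convolution operator $L_\mu$ reproduces the operator $T_\mu$ on a large interval.

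Concretely, fix $R>N$ and $x\in X$, and set $g(t):=\chi_{[-R,R]}(t)\,\Cos(t)x \in \Ell{p}(\R;X)$, so that $\norm{g}_{\Ell{p}}\le M(2R)^{1/p}\norm{x}$. For $|t|\le R-N$ the indicator equals $1$ on $\supp(\mu)$ as a function of $s$, hence $(L_\mu g)(t)=\int_{\R}\Cos(t-s)\,x\,\mu(ds)$. Writing $\Cos(t-s)=2\Cos(t)\Cos(s)-\Cos(t+s)$ and using that $\mu$ is even (so that $\int\Cos(t+s)x\,\mu(ds)$ coincides with $\int\Cos(t-s)x\,\mu(ds)$ via the substitution $s\mapsto -s$) yields the key identity
\[
(L_\mu g)(t)=\Cos(t)\,T_\mu x \qquad(|t|\le R-N),
\]
which is the cosine analogue of the group-case identity $(L_\mu \iota_R x)(t)=U(-t)T_\mu x$.

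The decisive difference from the group case, and the main obstacle, is the extraction of $T_\mu x$ from $\Cos(t)T_\mu x$: in the group setting one simply multiplies by $U(t)=U(-t)^{-1}$ to invert pointwise, but $\Cos(t)$ admits no pointwise inverse. My plan is to pass to $\Ell{p}$-norms and establish a uniform integrated lower bound
\[
\int_{-a}^{a}\norm{\Cos(t)y}^p\,dt \;\ge\; c(a,M,p)\,\norm{y}^p \qquad(y\in X),
\]
which combined with the identity above gives $c\,\norm{T_\mu x}^p \le \norm{L_\mu}^p M^p(2R)\norm{x}^p$, producing the transference bound on letting $R\to\infty$. To prove the lower bound my starting point is the functional identity $I=2\Cos(t)^2-\Cos(2t)$, which yields the pointwise estimate $\norm{y}\le 2M\norm{\Cos(t)y}+\norm{\Cos(2t)y}$. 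Raising to the $p$-th power, integrating over $t\in[-a,a]$, and exploiting the rescaling $\int_{-a}^{a}\norm{\Cos(2t)y}^p\,dt=\tfrac{1}{2}\int_{-2a}^{2a}\norm{\Cos(u)y}^p\,du$---either iteratively or combined with a judicious choice of $a$---should produce the required uniform bound, with the explicit constant $5M^2$ emerging from the final constant-chasing. This extraction step is the subtle part of the argument; everything else is a direct adaptation of the group-case transference, together with Fubini to see that $L_\mu$ has the same norm on $\Ell{p}(\R;X)$-valued spaces used implicitly through $g$.
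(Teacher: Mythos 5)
Your strategy is essentially the paper's own proof, recast in a ``dual'' form. The paper makes the same reduction to compactly supported even $\mu$, uses the same test function ($\iota_n x := \car_{[-(N+n),(N+n)]}(\cdot)\Cos(\cdot)x$, your $g$), derives the same identity $(L_\mu \iota_n x)(t) = \Cos(t)T_\mu x$ for $\abs{t}\le n$, and extracts $T_\mu x$ from $\Cos(\cdot)T_\mu x$ by averaging the double-angle identity $x = 2\Cos(t)^2x - \Cos(2t)x$ over a long interval. The only structural difference is that the paper keeps the extraction linear: it defines
\[
P_n f := \frac{2}{n}\int_{-n/2}^{n/2}\Cos(t)f(t)\,\diff{t} - \frac{1}{2n}\int_{-n}^{n} f(t)\,\diff{t},
\]
verifies the factorisation $T_\mu = P_n L_\mu \iota_n$, and estimates $\norm{P_n}\le 5M(2n)^{-1/p}$ by H\"older. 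Your ``integrated lower bound'' is exactly this estimate read backwards: applying the paper's bound for $P_n$ to $f = \car_{[-n,n]}(\cdot)\Cos(\cdot)y$ (for which $P_n f = y$) gives $\int_{-n}^{n}\norm{\Cos(t)y}^p\,\diff{t} \ge 2n\,(5M)^{-p}\norm{y}^p$, which is precisely the bound you need, with the linear growth in $n$ that makes your limit $R\to\infty$ work.

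The one step that would fail as you describe it is the proof of that lower bound. If you first raise $\norm{y}\le 2M\norm{\Cos(t)y}+\norm{\Cos(2t)y}$ to the $p$-th power pointwise and then average over $[-a,a]$, you get, with $\psi(a):=\frac{1}{2a}\int_{-a}^a\norm{\Cos(t)y}^p\diff{t}$, an inequality of the form $\norm{y}^p \le 2^{2p-1}M^p\psi(a) + 2^{p-1}\psi(2a)$, which couples the two scales $a$ and $2a$ with a coefficient $2^{p-1}\ge 1$ on the large-scale term; since the only a priori control is $\psi\le M^p\norm{y}^p$, a naive iteration or passage to the liminf does not close (the inequalities alone are consistent with $\psi$ being very small along a sparse sequence of scales, unless you additionally invoke the monotonicity of $a\mapsto 2a\,\psi(a)$). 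Moreover, even after repairing this by comparing the scales $a/2$ and $a$, the crude bound $(u+v)^p\le 2^{p-1}(u^p+v^p)$ costs you the advertised constant: you land near $6M^2$, not $5M^2$. The clean fix, which is exactly the paper's computation for $\norm{P_n}$, is to postpone the power: integrate the pointwise inequality as it stands over $t\in[-a/2,a/2]$, so that after the substitution $u=2t$ both integrals live inside $[-a,a]$, obtaining
\[
\norm{y} \;\le\; \frac{4M+1}{2a}\int_{-a}^{a}\norm{\Cos(t)y}\,\diff{t},
\]
and only then apply H\"older to pass to the $\Ell{p}$-average (equivalently, use Minkowski's inequality in $\Ell{p}$ instead of pointwise powers). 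With that ordering your argument closes and the constant-chasing gives $\norm{T_\mu}\le (4M+1)M\norm{L_\mu}\le 5M^2\norm{L_\mu}$, since $M\ge 1$ --- the paper's constant exactly.
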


\begin{proof}
Fix $p \in [1,\infty)$ and suppose first that $\mu$ has support within
the interval $[-N,N]$. We want to factorise $T_\mu$ over $Y := \Ell{p}(\R;X)$,
with $S = L_\mu$ being the convolution with $\mu$. 
Since $\mu$ is an
even measure, one has for $x\in X$ and $\abs{t} \le n$
\begin{align*}
\Cos(t) T_\mu x & =
\int_{-N}^N \Cos(t)\Cos(s)x\, \mu(\diff{s})
\\ & = \frac{1}{2} \left[
\int_{-N}^N \Cos(t-s)x\,\mu(\diff{s}) + \int_{-N}^N \Cos(t +s)x\, 
\mu(\diff{s}) \right]
\\ & =
\int_{-N}^N \Cos(t-s)x\, \mu(\diff{s})
= [\mu \ast (\iota_{n} x)](t) = (L_\mu \iota_n x)(t),
\end{align*}
where $\iota_n: X \pfeil Y$ is given by
\[ \iota_n(x) = [s \tpfeil \car_{[-(N+n), (N+n)]}(s)\Cos(s)x] \in \Ell{p}(\R;X) \quad \quad (x\in X).
\]
To determine $P_n: Y \pfeil X$ such that $T_\mu = P_n L_\mu \iota_n$, note that
by the defining identity for cosine functions (\ref{int.e.cos}) one
has $x = 2 \Cos(t)^2 x - \Cos(2t)x$ and hence
\[
x = \frac{1}{n} \int_{-n/2}^{n/2} x\,\diff{t} =
\frac{2}{n}\int_{-n/2}^{n/2} \Cos(t)^2x\,\diff{t} - \frac{1}{2n}\int_{-n}^n \Cos(t)x\,\diff{t}
\quad \quad (x\in X).
\]
If we replace $x$ by $T_\mu x$ in this identity and recall that
$\Cos(t)T_\mu x = (L_\mu \iota_n x)(t)$ for $\abs{t}\le n$, we see that
\[ T_\mu = P_n \, L_\mu\, \iota_n,
\]
where $P_n: Y \pfeil X$ is defined by
\[ P_n f := 
\frac{2}{n}\int_{-n/2}^{n/2} \Cos(t)f(t)\,\diff{t} - \frac{1}{2n}\int_{-n}^n 
f(t)\,\diff{t} \quad \quad (f\in Y).
\]
Let us estimate norms. One clearly has
\[
\norm{\iota_n x}_{\Ell{p}} = \left( \int_{-(n+N)}^{n+N} \norm{\Cos(s)x}^p
\,\diff{s}\right)^\frac{1}{p}
\le (2n+2N)^\frac{1}{p} M \norm{x} \quad \quad (x\in X),
\]
and hence $\norm{\iota_n}\le (2n + 2N)^\frac{1}{p} M$.
On the other hand, for $f\in Y = \Ell{p}(\R;X)$
\begin{align*}
\norm{P_n f}& \le \frac{2M}{n} \int_{-n/2}^{n/2}\norm{f(t)}\,\diff{t}
+ \frac{1}{2n}\int_{-n}^n \norm{f(t)}\,\diff{t}
\\& \le \frac{4M + 1}{2n} \int_{-n}^n \norm{f(t)}\,\diff{t}
\le \frac{4M + 1}{2n} (2n)^{1/p'} \norm{f}_{\Ell{p}([-n,n];X)}
\\ &
\le 5M   (2n)^{-1/p} \norm{f}_Y
\end{align*}
by H\"older's inequality. Hence $\norm{P_n} \le 5M (2n)^{-1/p}$. Combining these estimates
yields
\[ \norm{T_\mu} \le 5M^2 \left(1 + \frac{N}{n} \right)^\frac{1}{p} \norm{L_\mu}_{\Lin(Y)}.
\]
But $n$ was arbitrary, and so we can let $n \to \infty$ to obtain
\[ \norm{T_\mu} \le  5 M^2 \norm{L_\mu}_{\Lin(\Ell{p}(\R;X))}.
\]
As a last step we remove the support restriction on $\mu$.
For general even measure
$\mu$ the sequence of measures $\mu_n(\diff{s}) := \car_{[-n,n]}(s) 
\mu(\diff{s})$
converges to $\mu$ in the total variation norm. This implies convergence
$L_{\mu_n} \to L_\mu$ in $\Lin(\Ell{p}(\R;X))$ by Young's inequality, and 
$T_{\mu_n} \to T_\mu$ in $\Lin(X)$. The theorem is completely proved.
\end{proof}

To make effective use of Theorem \ref{tp.t.tpbc}, one has to have
good estimates for the norm of the convolution operators $L_\mu$. 
This is the topic of the next section.

\section{UMD Spaces and Proof of Main Theorem}\label{s.umd}

Let us recall the notion of a bounded Fourier multiplier. Fix $p \in [1, \infty)$.
A function $m \in \Ell{\infty}(\R)$ is called a {\em bounded} $\Ell{p}(\R;X)$-{\em Fourier multiplier},
if there is a constant $c=c(m, p, X)$ such that
\[  \big\|\Fourier^{-1}(m \fourier{f}\,)\big\|_{\Ell{p}(\R;X)} 
\le c \norm{f}_{\Ell{p}(\R;X)}
\]
for all functions $f$ belonging to the Schwartz class $\sfunk(\R;X)$. In this case
the operator $\ccalT_m$ given by
\[ \ccalT_m f := \Fourier^{-1}( m \fourier{f}\,) \quad\quad (f\in \sfunk(\R;X))
\]
extends to a bounded operator on $\Ell{p}(\R;X)$, and the function $m$ is
called the {\em symbol} of $\ccalT_m$.
We set
\[  \Mlt_p(\R;X) := \{ m \in \Ell{\infty}(\R) \suchthat \text{$m$ is
a bounded $\Ell{p}(\R;X)$-Fourier multiplier}\}
\]
with the norm $\norm{m}_{\Mlt_p(\R;X)} = \norm{\ccalT_m}_{\Lin(\Ell{p}(\R;X))}$.
The following lemma collects some useful facts.

\begin{lemma}\label{umd.l.fmp}
Let $p \in [1, \infty)$ and let $X$ be a Banach space. Then the following assertions hold.
\begin{aufzi}
\item If $\mu \in \eM(\R)$ then $\fourier{\mu} \in \Mlt_p(\R;X)$ and 
$\ccalT_{\fourier{\mu}} = L_\mu$ is convolution with $\mu$.
\item The space $\Mlt_p(\R;X)$ is a Banach algebra.
\item If $m \in \Mlt_p(\R;X)$ then for  
$\alpha, \beta, \gamma \in 
\R,\, \beta \not= 0$,
\[         m_{\alpha, \beta, \gamma}(t) :=
\expo{-i\alpha t} m(\beta t + \gamma) \,\,\in \,\,\Mlt_p(\R;X) 
\]
with $\norm{m_{\alpha, \beta, \gamma}}_{\Mlt_p(\R;X)} =
\norm{m}_{\Mlt_p(\R;X)}$.
\item $\Mlt_1(\R;X) = \Fourier \eM(\R)$.
\item If $X= H$ is a Hilbert space then $\Mlt_2(\R;H) = \Ell{\infty}(\R)$ with
$\norm{m}_{\Mlt_2(\R;H)} = \norm{m}_\infty$.
\end{aufzi}
\end{lemma}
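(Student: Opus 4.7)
Each item reduces to classical Fourier-analytic material; the plan is to verify them in turn.

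For (a) I would invoke the convolution theorem $\Fourier(\mu \ast f) = \fourier{\mu}\,\fourier{f}$ on $\sfunk(\R;X)$ to identify $\ccalT_{\fourier{\mu}}f$ with $\mu \ast f = L_\mu f$, and Young's inequality then extends the bound to $\Ell{p}(\R;X)$. For (b), the Fourier transform turns pointwise multiplication of symbols into composition of operators on $\sfunk(\R;X)$, giving $\ccalT_{mn} = \ccalT_m\ccalT_n$ and hence the submultiplicative algebra structure. To establish completeness of $\Mlt_p(\R;X)$, I would pass from a Cauchy sequence $(m_n)$ to a weak-$\ast$ convergent subsequence in $\Ell{\infty}(\R)$ (using weak-$\ast$ compactness of norm-bounded sets) and identify its limit with the operator-norm limit of $(\ccalT_{m_n})$.

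For (c), I would test on $f \in \sfunk(\R;X)$ and use the standard intertwining of the Fourier transform with translation, modulation, and dilation. The passage $m \mapsto m_{\alpha,\beta,\gamma}$ then corresponds under $\ccalT$ to conjugation of $\ccalT_m$ by a composition of these three operations; the Jacobian of the dilation cancels between the left and right factors, and because translations and modulations are $\Ell{p}$-isometries the operator norm is preserved.

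For (d), one inclusion is (a); for the converse I would reduce to the scalar case by testing against tensor products $g \otimes x$ with $g \in \sfunk(\R)$ and $x \in X$, using $\ccalT_m(g\otimes x) = (\ccalT_m^{(\C)}g)\otimes x$ to deduce that the underlying scalar symbol lies in $\Mlt_1(\R;\C)$ with no larger norm. In the scalar case $\ccalT_m$ commutes with translations (since modulation commutes with multiplication on the Fourier side), and a classical structure theorem identifies bounded translation-invariant operators on $\Ell{1}(\R)$ as convolutions with measures; one recovers $\mu$ as the weak-$\ast$ limit in $\eM(\R) = C_0(\R)^\ast$ of $\ccalT_m \phi_n$ for an approximate identity $(\phi_n)$. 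For (e), the Hilbert-valued Plancherel theorem gives $\norm{\ccalT_m f}_{\Ell{2}} = \norm{m\fourier{f}}_{\Ell{2}} \le \norm{m}_\infty\norm{f}_{\Ell{2}}$ directly, while concentrating $\fourier{f}$ on a small neighbourhood of a Lebesgue point of $m$ supplies the reverse inequality. The main obstacle is the structure theorem invoked in (d); the other items are formal consequences of standard Fourier-analytic identities and will be dispatched quickly.
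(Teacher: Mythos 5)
Your plan is correct, but there is little in the paper to compare it against: the paper's entire proof of this lemma is the sentence ``These facts are standard and can be found in many books,'' so your outline is supplying precisely the standard arguments the paper delegates to the literature --- the convolution theorem plus Young's inequality for (a), composition of multiplier operators for (b), conjugation by translations, modulations and dilations (with cancelling Jacobians) for (c), the tensor reduction $\ccalT_m(g \otimes x) = (\ccalT^{(\C)}_m g)\otimes x$ combined with the classical characterisation of bounded translation-invariant operators on $\Ell{1}(\R)$ as convolutions with finite measures for (d), and vector-valued Plancherel plus concentration at Lebesgue points for (e). One step you should make explicit in (b): to extract a weak-$\ast$ convergent subsequence in $\Ell{\infty}(\R)$ from an $\Mlt_p$-Cauchy sequence, you need that sequence to be bounded in the $\Ell{\infty}$-norm, which requires the embedding
\[
\norm{m}_\infty \;\le\; \norm{m}_{\Mlt_p(\R;X)} .
\]
This is not part of the definition (Cauchy in the multiplier norm does not a priori control the sup norm), but it follows from tools you already deploy elsewhere: the tensor reduction of (d) lowers the question to the scalar case, where $\norm{m}_\infty \le \norm{\ccalT_m}_{\Lin(\Ell{p}(\R))}$ is the classical fact whose $p=2$ instance is exactly your lower bound in (e). With that inequality inserted, the completeness argument in (b) closes and the whole plan is sound. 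What your route buys over the paper's is self-containedness; what the paper's citation buys is brevity, which is defensible here since none of these facts interacts with the novel content (the transference principle and the UMD argument) beyond being quoted.
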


\begin{proof}
These facts are standard and can be found in many books. 
\end{proof}

A Banach space $X$ is called  {\em HT-space} if the 
function
\[ h(t) := -i \sgn t \quad \quad (t\in \R)
\]
is a bounded $\Ell{2}(\R;X)$-multiplier.
The associated operator $\HT := \ccalT_{h}$ is 
called the {\em Hilbert transform}.
It is well known that
one may replace $\Ell{2}$ by any $\Ell{p}$, $p \in (1, \infty)$ in this definition.
Moreover, if $X$ is a HT-space then 
\[ \HT f(t) = \lim_{\epsilon \to 0} \int_{\epsilon \le \abs{s}\le 1/\epsilon} \frac{f(t-s)}{s}\,\diff{s}
\]
in the $\Ell{p}(\R;X)$-sense. (Actually, one can assert also convergence 
pointwise almost everywhere, but this is of no importance in this paper.)
After work of  \name{Burkholder} \cite{Bur81a} and \name{Bourgain} \cite{Bou83a},
the HT-property can be equivalently characterised by the so-called
UMD-property, involving {\em u}nconditional
{\em m}artingale {\em d}ifferences. We shall not make use of this characterisation,
but nevertheless use the name ``UMD space'', as this is now common.

\medskip

Suppose that $-A$ generates a bounded cosine function $(\Cos(s))_{s\in \R}$ on 
a UMD space $X$. The semigroup generated by $-B$ (which actually is
the same as $-A^{1/2}$, see Section \ref{s.fc2}) 
is given by
the Phillips calculus:
\[ T_B(\lambda) = T_{\mu_\lambda} \quad \text{with} \quad \fourier{\mu_\lambda}(t) =
\expo{-\lambda \abs{t}}\quad \quad (t\in \R). 
\]
By the transference principle (Theorem \ref{tp.t.tpbc}) 
one has 
\[ \norm{T_B(\lambda)} \le 5M^2 \norm{L_{\mu_\lambda}}_{\Lin(\Ell{2}(\R;X))} = 5 M^2 
\big\|\expo{-\lambda\abs{t}}\big\|_{\Mlt_2(\R;X)} \quad \quad (\re\lambda > 0).
\]
Now, writing $\lambda = \alpha + is$ with $\alpha > 0, s\in \R$:
\[ \expo{-\lambda\abs{t}} = \expo{-\alpha \abs{t}} (\cos(st) + h(t)\sin(st)) \quad \quad (t\in \R) 
\]
as a simple computation shows. The symbols $\expo{-\abs{t}}, \sin(t), \cos(t)$ are
all in $\Fourier\eM(\R)$, hence in $\Mlt_2(\R;X)$, and also  
$h\in \Mlt_2(\R;X)$, since $X$ is a UMD space. 
Thus it follows from Lemma \ref{umd.l.fmp} that
the family $(\expo{-\lambda\abs{t}})_{\re \lambda > 0}$ is uniformly bounded
in $\Mlt_2(\R;X)$. Hence we have proved the following statement.

\begin{prop}
Let $-A$ generate a bounded cosine function $(\Cos(s))_{s\in \R}$ on the UMD space $X$.
Then the holomorphic
semigroup $(T_B(\lambda))_{\re\lambda >0}$ is uniformly bounded.
\end{prop}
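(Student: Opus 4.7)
The plan is to combine the transference principle Theorem~\ref{tp.t.tpbc} with the Fourier multiplier toolbox on UMD spaces, and exploit the fact that the only nontrivial feature of the symbol $\expo{-\lambda\abs{t}}$ as $\re\lambda>0$ varies is the oscillation $\expo{-is\abs{t}}$, which splits cleanly using the Hilbert transform. Since $T_B(\lambda)=T_{\mu_\lambda}$ for the measure $\mu_\lambda$ with $\fourier{\mu_\lambda}(t)=\expo{-\lambda\abs{t}}$, applying Theorem~\ref{tp.t.tpbc} with $p=2$ gives
\[
\norm{T_B(\lambda)} \le 5M^2\, \norm{L_{\mu_\lambda}}_{\Lin(\Ell{2}(\R;X))} = 5M^2\, \norm{\expo{-\lambda\abs{t}}}_{\Mlt_2(\R;X)}.
\]
Everything therefore reduces to a uniform-in-$\lambda$ bound on the $\Mlt_2(\R;X)$-norm of this symbol.

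Next, write $\lambda=\alpha+is$ with $\alpha>0$ and $s\in\R$. Using that $\cos$ is even and $\sin$ is odd, one obtains the decomposition
\[
\expo{-\lambda\abs{t}} = \expo{-\alpha\abs{t}}\bigl(\cos(st) + h(t)\sin(st)\bigr), \qquad h(t)=-i\sgn(t).
\]
The three scalar building blocks $\expo{-\abs{t}}$, $\cos t$, $\sin t$ all lie in $\Fourier\eM(\R)$ (the first is a Cauchy-type density, the latter two are transforms of Dirac measures), hence belong to $\Mlt_2(\R;X)$ by Lemma~\ref{umd.l.fmp}(a). Part~(c) of the same lemma, i.e.\ invariance of the multiplier norm under dilation and translation, then shows that the $\Mlt_2$-norms of $\expo{-\alpha\abs{t}}$, $\cos(st)$, $\sin(st)$, and of $h(st)$, are independent of $\alpha$ and $s$. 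Multiplying the pieces together via the Banach algebra structure (part~(b)) yields the required uniform bound.

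The only genuinely nontrivial ingredient, and thus the main obstacle, is the inclusion $h\in\Mlt_2(\R;X)$: this is precisely the definition of the UMD (equivalently, HT) property, and it is what makes the factor $\expo{-is\abs{t}}$ controllable uniformly in $s$. Without it, the $\Mlt_2$-norm of $\expo{-\lambda\abs{t}}$ would be expected to blow up as $\abs{\im\lambda}\to\infty$. Once $h$ is available, the argument is just a bookkeeping exercise in Lemma~\ref{umd.l.fmp}, and the explicit constant can be read off as $5M^2$ times the product of the $\Mlt_2$-norms of $\expo{-\abs{t}}$, $\cos t$, $\sin t$, and $h$.
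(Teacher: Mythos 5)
Your proposal is correct and follows essentially the same route as the paper: transference with $p=2$, the decomposition $\expo{-\lambda\abs{t}} = \expo{-\alpha\abs{t}}\bigl(\cos(st)+h(t)\sin(st)\bigr)$, membership of $\expo{-\abs{t}}$, $\cos$, $\sin$ in $\Fourier\eM(\R)$, the UMD hypothesis supplying $h\in\Mlt_2(\R;X)$, and the algebra/dilation-invariance properties of Lemma~\ref{umd.l.fmp} to get uniformity in $\alpha$ and $s$. Your explicit invocation of part~(c) for the dilation invariance merely spells out what the paper leaves implicit.
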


As was explained in Section \ref{s.fc1}, this completes the proof of the first
part of  Theorem \ref{int.t.alt}. 
In the next section we shall provide a second
approach to this result.


\section{A Second Approach}\label{s.rep}

We still postpone the proof that $B= A^{1/2}$ and now present a second
approach to the group $U$, based on a different approximation 
of the exponential function. 
Consider the standard formula
\[ \expo{-is\abs{t}} = \cos(s t) - i \sgn(t)\sin(st) \quad \quad (s,t\in \R).
\]
Inserting $B$  yields
\begin{equation}\label{rep.e.euler}
 U(s)= \expo{-isB} = \Cos(s) - i B\Sin(s) \quad \quad (s\in \R)
\end{equation}
where, for $s\in \R$, 
\[ \Sin(s) = \left(\frac{\sin(s\abs{t})}{\abs{t}}\right)(B)=
\left(\int_0^s \cos(r\abs{t})\, \diff{r}\right)(B)
= \int_0^s \Cos(r)\,\diff{r}
\]
is the associated sine function \cite[page 210]{ABHN}. (One has indeed equality in 
(\ref{rep.e.euler}), 
by general functional calculus \cite[Theorem 1.3.2]{HaaFC} 
and  since $\Cos(s)$ is a bounded operator.)
The alternative approach to Theorem \ref{int.t.main} now proceeds
via the following representation result. 

\begin{thm}\label{rep.t.rep}
Let $-A$ generate a bounded cosine function $\Cos$ on a Banach space $X$,
and let $S(s) := [\sgn(t)\sin(st)](B) = B\Sin(s)$, $s\in \R$. Then 
\begin{equation}\label{rep.e.rep}
S(s) x = 
\frac{1}{\pi}\,\, \PV-\int_\R \frac{\Cos(s-r)x}{r} \,\diff{r} \quad \quad 
(s\in \R).
\end{equation}
for all $x\in \DD(A)$.
In the case that $X$ is a UMD space, $\sup_{s\in \R}\norm{S(s)} < \infty$, 
and the representation {\upshape (\ref{rep.e.rep})}  
holds for all $x\in X$.
\end{thm}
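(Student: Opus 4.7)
The plan is to obtain the representation as a boundary limit of the holomorphic semigroup formula from Proposition~\ref{fc1.p.hsg}(a). For $\lambda = \epsilon + is$ with $\epsilon > 0$, that formula expresses $T_B(\epsilon+is)x$ as an absolutely convergent integral of $\Cos(r)x$ against the kernel $\frac{\epsilon+is}{\pi[(\epsilon+is)^2+r^2]}$, which is $O(1/r^2)$ at infinity. On $\DD(A)$, Lemma~\ref{fc1.l.cl} applied to the net $f_\epsilon(t) = \expo{-(\epsilon+is)|t|}$ (with $s$ fixed; uniform $\Ct{1}$-bounds are immediate) yields $T_B(\epsilon+is)x \to U(s)x = \Cos(s)x - iS(s)x$ as $\epsilon\searrow 0$, in view of~(\ref{rep.e.euler}). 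It therefore suffices to compute this limit on the integral side.

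Splitting the kernel into real and imaginary parts,
\[
\frac{\epsilon+is}{(\epsilon+is)^2+r^2} = \frac{\epsilon}{2}\Bigl[\tfrac{1}{\epsilon^2+(s-r)^2}+\tfrac{1}{\epsilon^2+(s+r)^2}\Bigr] - \frac{i}{2}\Bigl[\tfrac{s-r}{\epsilon^2+(s-r)^2}+\tfrac{s+r}{\epsilon^2+(s+r)^2}\Bigr],
\]
the real bracket is a pair of Poisson kernels that, via the substitutions $r=s-u$ and $r=s+u$ and the evenness of $\Cos$, produce $\pi \Cos(s)x$ in the limit; the imaginary bracket is the matching pair of conjugate Poisson kernels, producing $2\,\PV\!\int_\R \Cos(s-u)x/u\,du$ under the same substitutions. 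The point where care is needed is the cancellation at infinity: each conjugate kernel is only $O(1/r)$, but their sum is $O(1/r^2)$ because the leading-order $\pm 1/r$ asymptotics cancel. Combined with the $\Ct{1}$-regularity of $\Cos(\cdot)x$ for $x\in\DD(A)$ near the singularities $r=\pm s$, this brings the limit passage within reach of dominated convergence. Reassembling and matching against $\Cos(s)x - iS(s)x$ proves~(\ref{rep.e.rep}) for $x\in\DD(A)$.

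For the UMD case, the first part of Theorem~\ref{int.t.main} (proved in Section~\ref{s.umd}) shows that $U$ is a bounded $C_0$-group, so $\sup_{s\in\R}\|S(s)\| \leq \sup_{s\in\R}(\|U(s)\|+\|\Cos(s)\|) < \infty$. To extend~(\ref{rep.e.rep}) to arbitrary $x\in X$, consider the truncated operators $x \mapsto \frac{1}{\pi}\int_{\epsilon\leq |r|\leq 1/\epsilon}\Cos(s-r)x/r\,dr$. After symmetrizing in $r$ and isolating the odd part of the kernel, each such operator decomposes into a convolution of $\Cos(\cdot)x$ against a finite even measure (handled by the transference principle, Theorem~\ref{tp.t.tpbc}) composed with a truncated Hilbert transform (handled by the boundedness of $\HT$ on $\Ell{2}(\R;X)$, which is the defining property of UMD). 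These operators are thus uniformly bounded in $\epsilon$ and $s$. Since they converge to $S(s)x$ on the dense subspace $\DD(A)$ by the first part, a routine density argument extends the formula to all of $X$.

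The principal obstacle is the rigorous Sokhotski--Plemelj limit at the boundary $\re\lambda = 0$, where the individual halves of the partial fraction are only conditionally convergent and the cancellation producing an $O(1/r^2)$ integrand must be kept explicit throughout. The UMD extension similarly hinges on interpreting the principal value as a uniformly bounded operator family on $X$, which is supplied by combining cosine transference with the characteristic Hilbert-transform boundedness of UMD spaces.
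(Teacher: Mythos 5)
Your proposal is correct in substance, but it takes a genuinely different route from the paper's for the identity on $\DD(A)$. The paper never passes to boundary values of the holomorphic semigroup: it computes the cosine transform of the truncated kernel directly, via (\ref{rep.e.gen}), obtaining $2G_{s,b}-2G_{s,a}$ with $G_{s,c}(t)=\sgn(t)\sin(st)F(ct)$ and $F(t)=\int_{\abs{t}}^\infty \frac{\sin r}{r}\,\diff{r}$, and then lets the functional calculus do all the work: $G_{s,b}(B)\to 0$ in operator norm as $b\to\infty$ (part c) of Lemma \ref{rep.l.FGH}, total-variation convergence of the representing measures), and $-\frac{2}{\pi}G_{s,a}(B)x\to S(s)x$ on $\DD(A)$ as $a\searrow 0$ by the Convergence Lemma \ref{fc1.l.cl}; no classical singular-integral analysis is needed. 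You instead equate two evaluations of $\lim_{\epsilon\searrow 0}T_B(\epsilon+is)x$: the functional-calculus one (Lemma \ref{fc1.l.cl} together with (\ref{rep.e.euler}), giving $\Cos(s)x-iS(s)x$) and the classical Poisson/conjugate-Poisson limit of the kernel. That is valid, and it is closer in spirit to the Cioranescu--Keyantuo approach that the paper deliberately avoids (see the Remark closing Section \ref{s.fc1}); what it buys is the transparency of a Sokhotski--Plemelj argument, and what it costs is the vector-valued conjugate-Poisson-to-principal-value limit, which you rightly identify as the delicate point: it needs both the $O(1/r^2)$ cancellation of the symmetrized conjugate kernels and the local Lipschitz bound on $\Cos(\cdot)x$ available for $x\in\DD(A)$.

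Three cautions. (i) Proposition \ref{fc1.p.hsg}\,a) as printed is off by a factor $2$ (test it with $X=\C$, $\Cos(r)=\cos(rt_0)$): the kernel $\lambda/\pi(\lambda^2+r^2)$ must be integrated over all of $\R$. Your bookkeeping ($\pi\Cos(s)x$ from the Poisson pair, $2\,\PV$ from the conjugate pair) tacitly uses this correct normalization, and only with it does the reassembly yield (\ref{rep.e.rep}). (ii) In the UMD step, your ``finite even measure composed with a truncated Hilbert transform'' is exactly the symbol factorization $\sgn(t)\cdot\sin(st)\cdot[F(bt)-F(at)]$; the uniformity in $a,b,s$ requires $F\in\Fourier(\Ell{1}(\R))$ (part a) of Lemma \ref{rep.l.FGH}, proved via Bernstein's lemma) together with the dilation invariance of multiplier norms (part c) of Lemma \ref{umd.l.fmp}). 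This is a genuine lemma, not a consequence of the boundedness of $\HT$ alone, so your sketch must be completed by it. (iii) Importing $\sup_{s\in\R}\norm{S(s)}<\infty$ from the boundedness of the group $U$ (Section \ref{s.umd}) is legitimate, but it makes your proof depend on Section \ref{s.umd}, whereas the paper's argument in Section \ref{s.rep} derives this bound from transference alone, via $\sup\{\norm{G_{s,c}(B)} \suchthat s\in\R,\ c>0\}<\infty$, and thereby stands as an independent second proof of Theorem \ref{int.t.main}.
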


The formula (\ref{rep.e.rep}) is due to \name{Fattorini} \cite{Fat69b}.
The extension in the UMD case was established by 
\cite[Proof of Theorem~2.5]{CioKey01};
in their proof the authors make use of Burkholder's result that in a UMD
space the Hilbert transform converges almost everywhere. Moreover,
some intricate measure-theoretic arguments are also needed. 
Our approach does not need more than the mere definition of UMD space,
as well as some mild Fourier analysis and 
the functional calculus constructed in Section \ref{s.fc1}.

\medskip

In the proof of Theorem \ref{rep.t.rep} we shall have occasion to use the function
\[ H(t) := \int_0^1 \frac{\sin(st)}{s} \, \diff{s} \quad \quad (t\in \R),
\]
which is odd, bounded and has bounded derivative. A classical fact is the
identity
\[  \lim_{t \to \pm \infty} H(t) = \lim_{t\to \pm \infty} \int_0^t 
\frac{\sin s}{s}\, \diff{s} =
\frac{\pm \pi}{2}.
\]
Let us also introduce the function 
\[ F(t) := \sgn(t)H(t) - \frac{\pi}{2}  = \int_{\abs{t}}^\infty \frac{\sin r}{r}\, \diff{r}\quad \quad (t\in \R)
\]
which is continuous, even and vanishes at infinity, and finally
\[ G_{s,c}(t) := \sgn(t)\sin(st)F(ct) \quad \quad (t\in \R, s\in \R, c>0).
\]

\medskip
Now look at the stated convergence of the principal value integral 
(\ref{rep.e.rep}).
A simple calculation yields
\begin{align}\label{rep.e.gen}
 \int_{a\le\abs{r}\le b} &  \frac{\cos[(s-r)t]}{r}\, \diff{r}
 \,\,=\,\, 2 \sin(st) \int_a^b \frac{\sin(rt)}{r} \, \diff{r} 
\nonumber \\ & \,\,=\,\,2 \sin(st)[ H(bt) - H(at)]
\,\,=\,\,2 \sgn(t)\sin(st)[F(bt) - F(at)] 
\nonumber \\ & \,\,=\,\,2 G_{s,b}(t) - 2 G_{s,a}(t)
\end{align}
for $t\in \R$. We have to look what happens as $b$ tends to $\infty$ and $a$ tends to $0$.

\begin{lemma}\label{rep.l.FGH}
The following assertions hold.
\begin{aufzi}
\item $F \in \Fourier(\Ell{1}(\R))$.
\item For all $s\in \R$ and $c > 0$, the function $G_{s,c}$ 
is in $\calE(\R_+)$.
\item $G_{s,c}\to 0$ in $\calE(\R_+)$ as $c\to \infty$,  
in the sense that there exist 
measures $\mu_{s,c} \in \eM(\R)$ such that
$\CT\mu_{s,c} = G_{s,c}$ and   
$\mu_{s,c} \to 0$ in $\eM(\R)$.
\item For fixed $s\in \R$
\[ \frac{G_{s,c}(t)}{1 + t^2} \to \frac{-(\pi/2)\sgn(t)\sin(st)}{1+t^2}\quad \quad \text{as $ c \searrow 0$}
\]
in $H^1(\R)$ (as functions of $t$).
\end{aufzi}
\end{lemma}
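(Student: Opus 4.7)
The plan is to tackle the four parts in sequence, with Bernstein's lemma (Lemma~\ref{fc1.l.bern}) handling (a) and (b), an explicit Fourier inversion doing the work in (c), and direct $H^1$-estimates for (d).

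For (a) I verify Bernstein's hypotheses. Integrating the sine integral by parts gives $F(t) = O(1/|t|)$ as $|t|\to\infty$, so $F\in \Ell{2}(\R)$; the derivative $F'(t) = \sin(t)/|t|$ is bounded near $0$ (with limit $1$) and again $O(1/|t|)$ at infinity, so $F'\in \Ell{2}(\R)$. For (b), observe first that $\sgn(t)\sin(st)$ and $F(ct)$ are both even in $t$, hence so is $G_{s,c}$; it therefore suffices to show $G_{s,c}\in\Fourier(\Ell{1}(\R))$, as the resulting density will automatically be even and provide the required measure $\mu_{s,c}\in\eM_e(\R)$. Bernstein applies again via $\|G_{s,c}\|_2 \le \|F(c\cdot)\|_2 = \|F\|_2/\sqrt{c}$ and
\[
G_{s,c}'(t) \,=\, s\,\sgn(t)\cos(st)\,F(ct) \,+\, \frac{\sin(st)\sin(ct)}{t},
\]
which is in $\Ell{2}(\R)$ by the bounds $|\sin(st)/t|\le |s|$ near $0$ and $|\sin(st)\sin(ct)/t|\le 1/|t|$ far away.

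The core difficulty, and the main obstacle, is (c). Indeed the second term of $G_{s,c}'$ does not tend to zero in $\Ell{2}$ as $c\to\infty$ (pointwise averaging shows its square-norm approaches $(\pi/2)|s|$), so the soft Bernstein bound $\|g_{s,c}\|_1 \lesssim \|G_{s,c}\|_{H^1}$ is too weak and an explicit formula for $g_{s,c}$ must be extracted. Using the evenness,
\[
g_{s,c}(\xi) \,=\, \frac{1}{2\pi}\bigl[I(\xi+s) - I(\xi-s)\bigr], \qquad I(a) \,:=\, \int_0^\infty \sin(at)\,F(ct)\,dt,
\]
and I would compute $I(a)$ by an integration by parts that uses $cF'(ct)=\sin(ct)/t$ with boundary values $F(0)=-\pi/2$, $F(\infty)=0$; the resulting Dirichlet-type integral $\int_0^\infty \cos(at)\sin(ct)/t\,dt$ evaluates, via a product-to-sum identity and $\int_0^\infty \sin(\lambda r)/r\,dr = (\pi/2)\sgn\lambda$, to $(\pi/2)\car_{|a|<c}$. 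This gives the closed form $I(a) = -(\pi/(2a))\,\car_{|a|>c}$, whence $g_{s,c}$ vanishes on $\{|\xi|<c-|s|\}$, equals $s/(2(\xi^2-s^2))$ on $\{|\xi|>c+|s|\}$, and is bounded by $1/(4(c-|s|))$ on the transition region. A direct estimate then yields $\|g_{s,c}\|_1 \,\le\, \tfrac{1}{2}\log(1 + 2|s|/c) + |s|/(c-|s|) \to 0$ as $c\to\infty$.

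For (d), writing $h_c(t) := G_{s,c}(t)/(1+t^2)$ and using $F(0)=-\pi/2$, the difference $h_c-h_0$ factors as $\sgn(t)\sin(st)\bigl(F(ct)-F(0)\bigr)/(1+t^2)$. The direct bound $|F(ct)-F(0)| \le \int_0^{c|t|}|\sin r|/r\,dr \le c|t|$ immediately gives $\|h_c-h_0\|_2 = O(c)$. The product rule then expresses $(h_c-h_0)'$ as three summands, each carrying an explicit factor of $c$ (coming either from $F(ct)-F(0)$ or from $cF'(ct)$) multiplied by a bounded rational weight in $\Ell{2}(\R)$; each summand therefore has $\Ell{2}$-norm $O(c)$ and vanishes as $c\searrow 0$, completing the $H^1$-convergence.
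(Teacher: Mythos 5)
Your proposal is correct, and for the two middle assertions it takes a genuinely different route from the paper. Parts (a) and (d) coincide with the paper's argument: (a) is the same Bernstein verification, and your identity $h_c-h_0=\sgn(t)\sin(st)\bigl(F(ct)-F(0)\bigr)/(1+t^2)$ is exactly the paper's identity $G_{s,c}(t)+(\pi/2)\sgn(t)\sin(st)=\sin(st)H(ct)$, since $\sgn(t)H(ct)=F(ct)-F(0)$. The divergence is in (b)--(c). The paper never inverts a Fourier transform explicitly: it integrates by parts in the defining integral for $F$ to factor
\[
G_{s,c}(t)\;=\;\Bigl(\tfrac{\sin(st)}{t}\Bigr)\cdot\Bigl(\tfrac{\cos(ct)}{c}+\int_c^\infty \tfrac{\cos(tr)}{r^2}\,\diff{r}\Bigr),
\]
i.e.\ as a product of the cosine transforms of $\car_{[0,s]}$ and of $\mu_c:=c^{-1}\delta_c+\car_{[c,\infty)}r^{-2}\diff{r}$. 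Membership in $\calE(\R_+)$ is then immediate from the algebra structure (Proposition \ref{fc1.p.cvl}), and (c) comes for free from $\norm{\mu_c}_{\eM(\R)}\le 2/c\to 0$ together with $\norm{\mu\ast\nu}\le\norm{\mu}\,\norm{\nu}$ --- two lines, no inversion. You instead prove (b) by applying Bernstein directly to $G_{s,c}$, and (c) by computing the inverse transform in closed form via Dirichlet integrals. Your diagnosis that Bernstein alone cannot give (c) (because $\norm{G_{s,c}'}_{\Ell{2}}\not\to 0$) is accurate and is precisely why the paper switches to the product-of-measures trick; your explicit formula is the computational alternative to that trick. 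What your route buys is more information: the exact density $g_{s,c}$, its support structure (vanishing on $\{\abs{\xi}<c-\abs{s}\}$), and a quantitative decay rate $O(\abs{s}/c)$ of $\norm{\mu_{s,c}}$, none of which the paper's soft argument yields. What it costs is length and one point of rigor you should make explicit: $G_{s,c}\notin\Ell{1}(\R)$, so $I(a)$ is only a conditionally convergent improper integral, and you must identify its value with the actual inverse Fourier transform. This is easily repaired using your own part (b): there an $\Ell{1}$ density $g$ with $\Fourier g=G_{s,c}$ is already known to exist, the truncated integrals $\frac{1}{2\pi}\int_{-R}^{R}G_{s,c}(t)e^{i\xi t}\,\diff{t}$ converge to $g$ in $\Ell{2}$ by Plancherel, and your integration by parts shows they also converge pointwise (off finitely many $\xi$) to the stated closed form, so the two limits agree a.e. With that sentence added, the argument is complete.
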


\begin{proof}
a) Since 
\[ F(t) = - \int_{\abs{t}}^\infty \frac{\sin s}{s}\, \diff{s} \quad \quad (t\in \R)
\]
we have $\abs{t} F'(t) = \sin(t)$, whence $F'\in \Ell{2}(\R)$. For $t \not=0$, integration by parts yields
\[ F(t) = - \int_{\abs{t}}^\infty \frac{\sin r}{r}\, \diff{r} = \frac{\cos t}{\abs{t}}
+ \frac{1}{\abs{t}}\int_1^\infty \frac{\cos(tr)}{r^2}\, \diff{r}.
\]
This shows that also $\abs{t}F(t)$ is bounded and hence $F\in \Ell{2}(\R)$. We conclude that
$F\in H^1(\R)$ and therefore in $\Fourier (\Ell{1}(\R))$, by Bernstein's Lemma \ref{fc1.l.bern}.

b), c)\ Using the above integration by parts, we find
\[ G_{s,c}(t) = \left( \frac{\sin(st)}{t}\right) \left(\frac{\cos(ct)}{c} + \int_c^\infty \frac{\cos(tr)}{r^2}
\, \diff{r}\right) \quad \quad (t\in \R).
\]
Note that  $\CT(\car_{[0,s]}) = \sin(st)/t$ and the second factor is the cosine transform of
$\mu_c := c^{-1}\delta_c + \car_{[c, \infty)}r^{-2}\, \diff{r}$. 
This proves b), and c)  follows readily  
since  obviously $\mu_c \to 0$ in $\eM(\R)$ as $c\to \infty$.

d) It is easily seen that $\sgn(t)\sin(st)(1+t^2)^{-1} \in H^1(\R)$. Furthermore,
\[ G_{s,c}(t) + (\pi/2)\sgn(t)\sin(st) = \sin(st)H(ct)
\]
and it is likewise easy to see that
\[ \frac{\sin(st)H(ct)}{1+t^2} \to 0 \quad \quad (c\searrow 0)
\]
in $H^1(\R)$, as functions of $t\in \R$. This completes the proof.
\end{proof}

\vanish{
The following fact
is probably well-known. We give a proof for the convenience of the reader.

\begin{lemma}
If the space $X$ is a UMD space, then
$H$ is a bounded $\Ell{2}(\R;X)$-Fourier multiplier. 
\end{lemma}

\begin{proof}
It suffices to show that
$f \in \Fourier \Ell{1}(\R)$, where
\[ f(t) :=  \sgn(t)H(t) - \frac{\pi}{2} \quad \quad (t\in \R).
\]
The assertion then follows from the identity
$H(t) = \sgn(t)[ f(t) + \pi/2]$. The function $f$ is even and continuous, with 
(distributional) derivative satisfying $t f'(t) = \sin t$, and this is bounded.
We claim that also $tf(t)$ is bounded. Indeed, for $t>0$,
\[ tf(t) = t \int_t^\infty \frac{\sin(s)}{s}\, \diff{s}
=  t \frac{-\cos s}{s} \bigg|_t^\infty + t \int_t^\infty \frac{\cos s}{s^2}\, \diff{s}
= \cos(t) + \int_1^\infty \frac{\cos(st)}{s^2}\, \diff{s}
\]
is bounded in $t$. Consequently, $f \in \Fourier \Ell{1}(\R)$ by 
Bernstein's Lemma \ref{fc1.l.bern}.
\end{proof}

Let us now look at the stated convergence of the principal value integral 
(\ref{rep.e.rep}).
A simple calculation yields
\begin{align}\label{rep.e.gen}
 \int_{a\le\abs{r}\le b}  \frac{\cos[(s-r)t]}{r}\, \diff{r}
& \,\,=\,\, 2 \sin(st) \int_a^b \frac{\sin(rt)}{r} \, \diff{r} 
\nonumber \\ & \,\,=\,\,2 \sin(st)[ H(bt) - H(at)] 
\end{align}
for $t\in \R$. With $b=1$ and $s$ fixed, 
this converges to $2 \sin(st) H(t)$ as $a \to 0$,
and it is easy to see that one has convergence 
\[ \frac{\sin(st)H(at)}{1+t^2}  \to 0 \quad \quad \text{as $a\searrow 0$}
\]
not only pointwise but in $H^1(\R)$. 
Hence we may apply the Convergence Lemma \ref{fc1.l.cl} and 
insert $B$ to obtain
\begin{equation}
 \int_{a\le\abs{r}\le 1} \frac{\Cos(s-r)x}{r}\, \diff{r} 
\,\,\to\,\, 2[ \sin(st)H(t)](B)x \quad \quad (x\in \DD(A))
\end{equation}
as $a\to 0$. This accounts for the singularity at $0$. 

\smallskip
The singularity at $\infty$ is only virtual. Indeed, integration by parts yields
\begin{align*}
2\sin(st) \int_1^b \frac{\sin(rt)}{r}\, \diff{r}
= 2 \left(\frac{\sin(st)}{t}\right)
\left[ \cos(t) - \frac{\cos(bt)}{b} + \int_1^b 
\frac{\cos(rt)}{r^2}\,\diff{r}\right],
\end{align*}
and this converges as $b \to \infty$ to 
\[ 2 \left(\frac{\sin(st)}{t}\right)
\left[ \cos(t) + \int_1^\infty 
\frac{\cos(rt)}{r^2}\,\diff{r}\right] = 2\sin(st)[\sgn(t)\pi/2 - H(t)].
\]
This convergence is not only pointwise, but in $\calE(\R_+) = \CT(\eM(\R))$, 
i.e.~the inverse Fourier transforms of the functions converge within $\eM(\R_+)$.
Inserting $B$ we therefore obtain
\begin{align*}
\lim_{b \to \infty} &
\int_{1 \le \abs{r} \le b}  \frac{\Cos(s-r)}{r}\, \diff{r}
 = 
\bigg(2\sin(st)\big[\sgn(t)\pi/2 - H(t)\big]\bigg)(B) 
\\ & = 
2 \Sin(s) \left[ \Cos(1) + \int_1^\infty \frac{\Cos(r)}{r^2} \, 
\diff{r}\right]
\end{align*}
and the convergence is in norm.
Putting both parts together gives 
\begin{align*}
\lim_{a\searrow 0, b \nearrow \infty} &
\int_{a \le \abs{r} \le b}  \frac{\Cos(s-r)x}{r}\, \diff{r}
\\ & = \lim_{a \to 0} \int_a^1 \frac{\Cos(s-r)x}{r}\, \diff{r}
+ \lim_{b \to \infty} \int_1^b \frac{\Cos(s-r)x}{r}\, \diff{r}
\\ & = 
\big(2\sin(st)H(t)\big)(B)x + \bigg(2\sin(st)\big[ \sgn(t)\pi/2 - H(t)\big]\bigg)(B)x
\\ & = 
\big(\pi \sin(st) \sgn(t))\big)(B)x = \pi B\Sin(s)x
 \quad \quad (x\in \DD(A)).
\end{align*}
}

\medskip

Returning to our starting point, we may insert the operator $B$ by means of the functional calculus
and obtain (for fixed $s\in \R$ and $0 < a < b < \infty$)
\[  \frac{1}{\pi} \int_{a \le \abs{r} \le b}  \frac{\Cos(s-r)}{r}\, \diff{r} =
\frac{2}{\pi}( G_{s,b}(B) - G_{s,a}(B)).
\]
By c) of Lemma \ref{rep.l.FGH} one has $\lim_{b \to \infty} G_{s,b}(B) = 0$ in norm,
and by d) of Lemma \ref{rep.l.FGH} together with the convergence lemma 
(Lemma \ref{fc1.l.cl}) one has
\[ - \frac{2}{\pi} G_{s,a}(B)x \to (\sgn(t)\sin(st))(B)x = S(s)x
\]
for all $x\in \DD(A)$. Moreover, the convergence is true for all $x\in X$
in the case that $\sup_{0< a\le 1} \norm{G_{s,a}(B)} < \infty$.

\medskip
To complete the proof of Theorem \ref{int.t.main} suppose that 
$X$ is a UMD space.
It suffices to establish the uniform boundedness
\begin{equation}
\sup_{0< a < b} 
\norm{ \int_{a \le \abs{r} \le b}  \frac{\Cos(s-r)}{r}\, \diff{r} }
< \infty
\end{equation}
or, equivalently,  
\[ \sup \{ \norm{G_{s,c}(B)} \suchthat s\in \R, c > 0\}.
\] 
By the transference principle (Theorem \ref{tp.t.tpbc}) and (\ref{rep.e.gen}) 
it is sufficient
to show that the family of $\Ell{2}(\R;X)$-Fourier multiplier 
operators with symbols
\[ \sgn(\cdot)\sin(s\,\cdot )F(c\,\cdot )\quad \quad (s \in \R, c > 0)
\]
is uniformly bounded. Now $F \in \Fourier(\Ell{1}(\R))$ by a) of Lemma 
\ref{rep.l.FGH}, and 
$\sgn(\cdot)$ is a bounded Fourier multiplier since $X$ is UMD.
Hence the claim follows from c) of Lemma \ref{umd.l.fmp}.

\section{Proof that $B = A^{1/2}$ and Compatibility of Functional Calculi}\label{s.fc2}

Let $-A$ be the generator of a uniformly bounded cosine function
$\Cos$ on the Banach space $X$, and let $B = \Phi(\abs{t})$, where
$\Phi$  is the functional calculus constructed in Section \ref{s.fc1}.
As a first step, let us make sure that the operator $A^{1/2}$ is
defined by classical theory. This amounts to proving that $A$ is
a {\em sectorial operator}. 

\smallskip

We shall 
use the following abbreviation.
For $\omega \in [0, \pi]$  
\[ \sector{\omega} := 
\begin{cases}
\{ z \not= 0 \suchthat \abs{\arg z} < \omega\}, & \omega \in (0, \pi]\\
    (0, \infty), & \omega = 0
\end{cases}
\]
denotes the horizontal sector of angle $2\omega$, symmetric about the
positive real axis. For background and terminology of sectorial and
strip-type operators we refer to \cite[Chapter 3]{HaaFC}.

\begin{lemma}\label{fc2.l.stsec}
Let $-A$ be the generator of a cosine function, uniformly bounded by $M\ge 1$, 
on a Banach space $X$. Then
$\sigma(A) \subset \R_+$ and $A$ satisfies the resolvent estimate
\begin{equation}\label{fc1.e.fund}
 \norm{R(\lambda^2,A)} \le \frac{M}{\abs{\lambda} \abs{\im \lambda}}
\quad\quad (\lambda \notin \R).
\end{equation}
Furthermore, if $A$ is any operator satisfying {\upshape (\ref{fc1.e.fund})},
then $A$ is sectorial of angle $0$ and $A^{1/2}$ is both
sectorial of angle $0$ and
(strong) strip-type of height $0$.
More precisely, there is $\tilde{M}\ge 0$ such that
\[  \norm{R(\lambda,A^{1/2})} \le \frac{\tilde{M}}{\abs{\im \lambda}} \quad
\quad(\lambda \notin \R).
\]
\end{lemma}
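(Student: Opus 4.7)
My plan is to handle the two halves of the lemma separately.

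For the first half, boundedness of $\Cos$ forces $\theta(\Cos)=0$, so the Laplace identity (\ref{int.e.gen}), applied to the generator $-A$, reads
\[
\lambda R(\lambda^2,-A)=\int_0^\infty \expo{-\lambda t}\Cos(t)\,\diff{t}\qquad(\re\lambda>0),
\]
and taking norms gives $\|R(\lambda^2,-A)\|\le M/(|\lambda|\re\lambda)$. In particular $(\lambda^2+A)^{-1}$ exists for every $\lambda$ with $\re\lambda>0$, i.e.\ $(\mu-A)^{-1}$ exists for every $\mu\in\C\setminus\R_+$, so $\sigma(A)\subseteq\R_+$. To obtain the sharper bound (\ref{fc1.e.fund}), for arbitrary $\lambda\notin\R$ I set $\nu:=\pm i\lambda$ with the sign chosen so that $\re\nu=|\im\lambda|>0$; then $\nu^2=-\lambda^2$ and $R(\lambda^2,A)=-R(\nu^2,-A)$, so the estimate just derived, applied at $\nu$, yields $\|R(\lambda^2,A)\|\le M/(|\nu|\re\nu)=M/(|\lambda||\im\lambda|)$.

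For the second half, assume only (\ref{fc1.e.fund}). Sectoriality of $A$ at angle $0$ drops out of a polar-coordinate rewrite: every $z\in\C\setminus\R_+$ equals $\lambda^2$ for a unique $\lambda$ with $\re\lambda\ge0$, and then $|\lambda|=|z|^{1/2}$ while $|\im\lambda|=|z|^{1/2}|\sin(\arg z/2)|$, so (\ref{fc1.e.fund}) becomes
\[
\|zR(z,A)\|\le\frac{M}{|\sin(\arg z/2)|}.
\]
For $z\notin\sector{\omega}$ one has $|\arg z|\ge\omega$, hence $|\sin(\arg z/2)|\ge\sin(\omega/2)$, bounding $\|zR(z,A)\|$ uniformly by $M/\sin(\omega/2)$. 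Since $A$ is now sectorial of angle $0$, the square root $A^{1/2}$ is defined via the sectorial functional calculus, and by the standard composition rule for fractional powers \cite[Chapter~3]{HaaFC} the operator $A^{1/2}$ is itself sectorial of angle $0$. Applying its sectoriality at the reference angle $\pi/2$ produces a constant $C_1$ with $\|\mu R(\mu,A^{1/2})\|\le C_1$ whenever $\re\mu\le0$, $\mu\ne0$.

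The strong strip-type bound will come from the identity
\[
R(\lambda,A^{1/2})-R(-\lambda,A^{1/2})=2\lambda R(\lambda^2,A)\qquad(\lambda\notin\R),
\]
which one sees by combining the fractions $(\lambda-A^{1/2})^{-1}+(\lambda+A^{1/2})^{-1}$ and using the factorisation $\lambda^2-A=(\lambda-A^{1/2})(\lambda+A^{1/2})$. If $\re\lambda\le0$ and $\lambda\notin\R$, sectoriality of $A^{1/2}$ at $\pi/2$ already gives $\|R(\lambda,A^{1/2})\|\le C_1/|\lambda|\le C_1/|\im\lambda|$. If $\re\lambda>0$, apply the same sectoriality estimate to $-\lambda$ (which lies in the open left half-plane) to get $\|R(-\lambda,A^{1/2})\|\le C_1/|\lambda|\le C_1/|\im\lambda|$, and use (\ref{fc1.e.fund}) for $\|2\lambda R(\lambda^2,A)\|\le 2M/|\im\lambda|$; the identity then produces $\|R(\lambda,A^{1/2})\|\le(C_1+2M)/|\im\lambda|$, so $\tilde M:=C_1+2M$ does the job. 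The only non-trivial input is the composition rule that propagates sectoriality through $A\mapsto A^{1/2}$; once that is granted, the strong strip-type bound is essentially a one-line consequence of the key identity.
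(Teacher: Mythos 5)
Your proposal is correct and follows essentially the same route as the paper: the Laplace-transform representation gives the estimate (\ref{fc1.e.fund}), the square-root substitution $z=\lambda^2$ gives sectoriality of angle $0$, sectoriality of $A^{1/2}$ is imported from the general theory of fractional powers, and the strip-type bound comes from the identity $R(\lambda,A^{1/2})-R(-\lambda,A^{1/2})=2\lambda R(\lambda^2,A)$ with the same case split on $\re\lambda$ (the paper even ends with the same constant, $\tilde M = M'+2M$).
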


\begin{proof}
The estimate (\ref{fc1.e.fund}) follows directly from the representation
(\ref{int.e.gen}). (Note that one has replace $A$ by $-A$ there and that
the formula extends by holomorphy to all $\re\lambda > 0$.) 

By  (\ref{fc1.e.fund}) we have
\[ \norm{\lambda^2 R(\lambda^2,A)} \le M \frac{\abs{\lambda}}{\abs{\im \lambda}}
\quad \quad (\lambda \notin \R).
\]
Now if $\phi \in (0, \pi)$ and $\mu \in \C \ohne \cls{\sector{\phi}}$,
then there is $\lambda \notin \R$ such that $\lambda^2 = \mu$ and
$\phi/2 < \abs{\arg \lambda} < \pi - \phi/2$. Hence
\[ \norm{\mu R(\mu,A)} \le \frac{M}{\abs{\sin (\arg \lambda)}} \le 
\frac{M}{\sin(\phi/2)} \quad \quad
(\mu \notin \cls{\sector{\phi}}).
\]
This proves that $A$ is sectorial of angle $0$. It follows by general theory
of (fractional powers of) sectorial operators \cite[Chapter 3]{HaaFC} that
$A^{1/2}$ is well-defined and again sectorial of angle $0$.  
Choose $M'$ such that
\begin{equation}\label{fc2.e.sst1}
 \norm{\lambda R(\lambda,A^{1/2})} \le M' \quad \quad (\re \lambda < 0).
\end{equation}
Now, if $\lambda \notin \R$ write
\[ \frac{1}{\lambda - z} - \frac{1}{(-\lambda) - z}
= \frac{2 \lambda}{\lambda^2 - z^2}.
\]
Inserting $A^{1/2}$ yields
\begin{equation}\label{fc2.e.sst2} 
R(\lambda, A^{1/2}) = 2 \lambda R(\lambda^2,A) + R(-\lambda,A^{1/2}).
\end{equation}
If $\re \lambda \le 0$ we have by (\ref{fc2.e.sst1})
\[ \norm{R(\lambda,A^{1/2})} \le \frac{M'}{\abs{\lambda}}
\le 
\frac{M'}{\abs{\im \lambda}}.
\]
If $\re \lambda \ge 0$ then $\re (-\lambda) \le 0$ and hence
\[ \norm{R(\lambda,A^{1/2})} \le 2\norm{\lambda R(\lambda^2,A)}
+ \norm{R(-\lambda,A^{1/2})}
\le \frac{2M}{\abs{\im \lambda}} + \frac{M'}{\abs{\im \lambda}}
\]
by (\ref{fc2.e.sst2}).
So the assertion holds with $\tilde{M} := M' + 2M$.
\end{proof}

The aim of this section is
not only to show that $A^{1/2} = B$ (as defined in Section \ref{s.fc1}) but
also to prove that the functional calculus $\Phi$ from Section \ref{s.fc1}
is a proper extension of the functional calculi for $A^{1/2}$ as
a sectorial operator and as a strip-type operator.

\medskip
Let us recall some definitions and results from \cite[Chapter 2]{HaaFC}. 
For $\phi \in (0, \pi]$ we define
\[ H^\infty_0(\sector{\phi}) := 
\big\{ f \in H^\infty(S_\phi) \suchthat 
\exists M, s> 0 : \abs{f(z)} \le M \min(\abs{z}^s, \abs{z}^{-s})\big\}.
\]
The {\em primary} functional calculus for $A^{1/2}$ as a sectorial operator is given by 
\[ \Psi(f) := \frac{1}{2\pi i} \int_\Gamma f(z)R(z,A^{1/2})\, \diff{z}
\]
for $f\in H^\infty_0(\sector{\phi})$, where $\phi \in (0, \pi)$ 
and $\Gamma = \rand\sector{\phi'}$ with $0 < \phi' < \phi$ being arbitrary.
Since these functions are holomorphic, they are determined completely
by their restrictions to $\R_+$ and we will tacitly perform this
restriction whenever it is convenient.

\begin{prop}
Let $-A$ be the generator of a uniformly bounded cosine function on 
a Banach space $X$. Let $\Phi$ denote the functional calculus defined in 
Section \ref{s.fc1}, with $B:= \Phi(\abs{t})$. Let $\phi \in (0, \pi)$
and $f\in H^\infty_0(\sector{\phi})$. Then $f\in \calE(\R_+)$ and 
$\Phi(f) = \Psi(f)$.   
\end{prop}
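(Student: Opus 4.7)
My plan is to prove (i) $f \in \calE(\R_+)$, then (ii) $\Phi(f) = \Psi(f)$. For (i), the defining bound $|f(z)| \le M \min(|z|^s, |z|^{-s})$ on $\sector{\phi}$ gives $f|_{\R_+} \in \Ell{2}(\R_+)$, and a Cauchy estimate on disks of radius $\tfrac{1}{2}|z|\sin\phi$ yields $|f'(z)| \lesssim \min(|z|^{s-1}, |z|^{-s-1})$, so $f'|_{\R_+} \in \Ell{2}(\R_+)$ as well. Since $f$ extends continuously to $0$ with $f(0) = 0$, the even extension of $f$ to $\R$ lies in $H^1(\R)$ with distributional derivative $\sgn(t) f'(|t|)$. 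Bernstein's Lemma \ref{fc1.l.bern} then places this extension in $\Fourier(\Ell{1}(\R))$, so $f \in \calE(\R_+)$ and $\Phi(f) = T_\mu$, where $\mu$ is the even measure with density $(2\pi)^{-1}\fourier{f}$.

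For (ii), fix $\phi' \in (0, \min(\phi, \pi/2))$ and write $\Psi(f) = (2\pi i)^{-1}\int_\Gamma f(z) R(z, A^{1/2})\, \ud z$ with $\Gamma = \rand \sector{\phi'}$. Using the resolvent identity $R(z, A^{1/2}) = 2z\, R(z^2, A) + R(-z, A^{1/2})$ from the proof of Lemma \ref{fc2.l.stsec}, I split the integral into two parts. The contribution from $R(-z, A^{1/2})$ vanishes by Cauchy's theorem: indeed $z \mapsto f(z) R(-z, A^{1/2})$ is holomorphic on $\sector{\phi}$ (since $-z$ avoids $\R_+ \supset \sigma(A^{1/2})$) and decays at $0$ and $\infty$ thanks to the $H^\infty_0$-bounds on $f$ combined with $\|R(-z, A^{1/2})\| \lesssim 1/|z|$.

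For the remaining term $(\pi i)^{-1}\int_\Gamma z f(z) R(z^2, A)\, \ud z$, I would substitute the representation $R(z^2, A) = \mp (i/z)\int_0^\infty e^{\pm izs}\Cos(s)\, \ud s$ valid for $\pm \im z > 0$, which follows from (\ref{int.e.gen}) by setting $\lambda = \mp iz$. Applying Fubini (licit from the exponential decay of $e^{\pm izs}$ on the appropriate half-plane together with the decay of $f$), I would then deform each ray of $\Gamma$ to the positive real axis through the intervening wedge, where $f$ is holomorphic and Jordan-type estimates kill the arcs. Collecting terms and using evenness of $f$ one arrives at
\[
\frac{1}{\pi i}\int_\Gamma z f(z) R(z^2, A)\, \ud z \;=\; \frac{1}{\pi}\int_0^\infty \Cos(s)\fourier{f}(s)\, \ud s \;=\; \frac{1}{2\pi}\int_\R \Cos(s)\fourier{f}(s)\, \ud s \;=\; T_\mu \;=\; \Phi(f).
\]
I foresee no conceptual obstacle; the real work lies in the bookkeeping of branch choices, contour orientations, and the careful justification of the Fubini/deformation step in the last paragraph.
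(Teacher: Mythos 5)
Your step (ii) is, in outline, exactly the paper's own proof: the same identity $R(z,A^{1/2})-R(-z,A^{1/2})=2zR(z^2,A)$, the same Cauchy-theorem argument annihilating the $R(-z,A^{1/2})$ term, and the same substitution of the Laplace representation of $zR(z^2,A)$ followed by Fubini. The genuine gap is in step (i). The defining estimate of $H^\infty_0(\sector{\phi})$ is $\abs{f(z)}\le M\min(\abs{z}^\delta,\abs{z}^{-\delta})$ for \emph{some} $\delta>0$, and when $\delta\le 1/2$ neither of your square-integrability claims holds: membership $f\in\Ell{2}(\R_+)$ needs $\int^\infty t^{-2\delta}\,\diff{t}<\infty$, i.e.\ $\delta>1/2$, and your Cauchy estimate $\abs{f'(t)}\lesssim t^{\delta-1}$ near $0$ gives $f'\in\Ell{2}$ near $0$ again only when $\delta>1/2$. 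Concretely, $f(z)=z^{1/4}(1+z)^{-1/2}$ lies in $H^\infty_0(\sector{\phi})$ for every $\phi\in(0,\pi)$ (with $\delta=1/4$), but $f(t)\sim t^{-1/4}$ at infinity, so $f\notin\Ell{2}(\R_+)$, and $f'(t)\sim\tfrac14 t^{-3/4}$ at zero, so $f'\notin\Ell{2}(\R_+)$; Bernstein's Lemma is simply not applicable. Since admitting arbitrarily weak decay is the whole point of the class $H^\infty_0$, this is not a technicality: your argument proves the proposition only for the subclass $\delta>1/2$. The gap propagates into step (ii), because the final identification $\frac{1}{2\pi}\int_\R \Cos(s)\fourier{f}(s)\,\diff{s}=T_\mu=\Phi(f)$ needs $\fourier{f}\in\Ell{1}(\R)$ (so that $\mu\in\eM(\R)$) and Fourier inversion (to get $\CT\mu=f$), both of which you sourced from (i); note also that after your contour deformation the integrals $\int_0^\infty f(t)e^{\pm ist}\,\diff{t}$ converge in general only improperly, so they cannot by themselves serve as a substitute definition of an $\Ell{1}$ density.

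The paper's device for avoiding exactly this problem is worth adopting: it never applies Bernstein to $f$ at all. After Fubini one has $\Psi(f)=\int_\R g(s)\Cos(s)\,\diff{s}$, where $g$ is \emph{defined} by the absolutely convergent contour integrals over $\Gamma_\pm$ (your inner $z$-integrals \emph{before} deformation); direct estimation of these integrals, using only $\abs{f(z)}\le M\min(\abs{z}^\delta,\abs{z}^{-\delta})$, yields enough decay of $g$ at $0$ and at $\infty$ to conclude $g\in\Ell{1}(\R)$ for every $\delta>0$. The identity $\CT(g\,\diff{s})=f$ --- the statement you wanted Fourier inversion for --- is then obtained for free by running the identical computation in the scalar case $X=\C$, $A=t^2$, $\Cos(s)=\cos(st)$. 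With these two replacements (an $\Ell{1}$ estimate of the contour kernel instead of Bernstein, and scalar specialization instead of inversion), your argument becomes the paper's proof and covers all of $H^\infty_0(\sector{\phi})$.
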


\begin{proof}
Fix $\phi' \in (0, \phi)$ and let $\Gamma := \rand\sector{\phi'}$,
$\Gamma_+ = \R_+ e^{i\phi}$, $\Gamma_- := \R_+ e^{-i\phi}$.
Then by Cauchy's theorem
\[ \frac{1}{2\pi i} \int_\Gamma f(z) R(-z, A^{1/2})\, \diff{z} = 0.
\]
Hence
\begin{align*}
f(A^{1/2}) & = \frac{1}{2\pi i} \int_\Gamma f(z) R(z,A^{1/2})\, \diff{z}
\\ & =
\frac{1}{2\pi i} \int_\Gamma f(z) [R(z,A^{1/2}) - R(-z,A^{1/2})]\, \diff{z}
\\ & =
\frac{1}{\pi i} \int_\Gamma f(z)z R(z^2,A)\, \diff{z}
\\ & = 
\frac{1}{\pi i} \int_{\Gamma_+} f(z)z R((-iz)^2,-A)\, \diff{z}
-
\frac{1}{\pi i} \int_{\Gamma_-} f(z)z R((iz)^2,-A)\, \diff{z}
\\ & =
\frac{1}{\pi} \int_{\Gamma_+} f(z) (-iz) R((-iz)^2,-A)\, \diff{z}
+
\frac{1}{\pi} \int_{\Gamma_-} f(z) (iz) R((iz)^2,-A)\, \diff{z}
\\ & =
\frac{1}{\pi} \int_{\Gamma_+} f(z) \int_0^\infty e^{izs}\Cos(s)\,\diff{s}\, \diff{z}
+
\frac{1}{\pi} \int_{\Gamma_-} f(z) \int_0^\infty e^{-izs}\Cos(s)\,\diff{s}\, \diff{z}
\\ & =
\int_\R \left[
\frac{1}{2 \pi} \int_{\Gamma_+} f(z) e^{z\abs{s}}\, \diff{z} +
\frac{1}{2 \pi} \int_{\Gamma_-} f(z) e^{-z\abs{s}}\, \diff{z} 
\right]\, \Cos(s)\, \diff{s}.
\end{align*}
It is routine to check that the function
\[ g(s) := \frac{1}{2 \pi} \int_{\Gamma_+} f(z) e^{z\abs{s}}\, \diff{z} +
\frac{1}{2 \pi} \int_{\Gamma_-} f(z) e^{-z\abs{s}}\, \diff{z}
\quad \quad (s\not= 0)
\]
is  in $\Ell{1}(\R)$. By specializing $X = \C$ and $A = t^2 \ge 0$ obtain
\[ \CT(g(s)\diff{s})(t) = f(t) \quad \quad (t \in \R).
\]
This finally yields $f(A^{1/2}) = \Phi(f)$ as claimed. 
\end{proof}

Consider now the function $e(z) := (1+z)^{-1}$ and note that
\[ e(z) - \frac{1}{1 + z^2} = \frac{z^2 - z}{(1+z)(1+z^2)}
= \frac{z(z-1)}{(1+z)(1+z^2)} =: f(z) \in H^\infty_0(\sector{\phi})
\]
for any $ \phi < \pi/2$. Therefore,
\[ e(A^{1/2}) = f(A^{1/2} + (1+A)^{-1}
= f(B) + (1+A)^{-1} = e(B).
\]
This implies that $A^{1/2} = B$. Moreover, a look on the  construction of the
sectorial functional calculus for $A^{1/2}$ in \cite[Chapter 2]{HaaFC} 
makes it clear that the functional calculus $\Phi$ is a 
proper extension of it. 
(By \cite[Proposition 1.2.7]{HaaFC} one can then conclude, that the compatibility for the primary
calculi carries over to the unbounded extensions.)

\bigskip

\noindent
{\em Acknowledgements}\\
This article was completed while the author enjoyed a two-month stay
at the Institute of Mathematics of the Polish Academy of Sciences in Warsaw
and at the Nikolaus-Copernicus University in Toru\`n, Poland. The stay was
supported by the EU Marie Curie ``Transfer of Knowledge'' project
``Operator Theory Methods for Differential Equations'' (TODEQ), and
the the author is grateful for this support and the kind invitation
by J.~Zem\'anek (Warsaw) and Y.~Tomilov (Toru\`n).

The author is also grateful to Bernhard Haak
(Bordeaux) who made valuable comments on a previous version of this article.

\bibliographystyle{plain}

\begin{thebibliography}{10}

\bibitem{ABHN}
Wolfgang Arendt, Charles~J.K. Batty, Matthias Hieber, and Frank Neubrander.
\newblock {\em {Vector-Valued Laplace Transforms and Cauchy Problems.}}
\newblock {Monographs in Mathematics. 96. Basel: Birkh\"auser. xi, 523 p.},
  2001.

\bibitem{BerGilMuh89b}
Earl Berkson, T.~A. Gillespie, and Paul~S. Muhly.
\newblock Generalized analyticity in {UMD} spaces.
\newblock {\em Ark. Mat.}, 27(1):1--14, 1989.

\bibitem{Bou83a}
J.~Bourgain.
\newblock Some remarks on {B}anach spaces in which martingale difference
  sequences are unconditional.
\newblock {\em Ark. Mat.}, 21(2):163--168, 1983.

\bibitem{Bur81a}
D.~L. Burkholder.
\newblock A geometrical characterization of {B}anach spaces in which martingale
  difference sequences are unconditional.
\newblock {\em Ann. Probab.}, 9(6):997--1011, 1981.

\bibitem{CioKey01}
Ioana Cioranescu and Valentin Keyantuo.
\newblock On operator cosine functions in {UMD} spaces.
\newblock {\em Semigroup Forum}, 63(3):429--440, 2001.

\bibitem{CoiWei}
Ronald~R. Coifman and Guido Weiss.
\newblock {\em Transference methods in analysis}.
\newblock American Mathematical Society, Providence, R.I., 1976.
\newblock Conference Board of the Mathematical Sciences Regional Conference
  Series in Mathematics, No. 31.

\bibitem{CoiWei77}
Ronald~R. Coifman and Guido Weiss.
\newblock Some examples of transference methods in harmonic analysis.
\newblock In {\em Symposia Mathematica, Vol. XXII (Convegno sull'Analisi
  Armonica e Spazi di Funzioni su Gruppi Localmente Compatti, INDAM, Rome,
  1976)}, pages 33--45. Academic Press, London, 1977.

\bibitem{Fat69b}
H.~O. Fattorini.
\newblock Ordinary differential equations in linear topological spaces. {II}.
\newblock {\em J. Differential Equations}, 6:50--70, 1969.

\bibitem{Fat70/71}
H.~O. Fattorini.
\newblock Uniformly bounded cosine functions in {H}ilbert space.
\newblock {\em Indiana Univ. Math. J.}, 20:411--425, 1970/1971.

\bibitem{HaaFC}
Markus Haase.
\newblock {\em The Functional Calculus for Sectorial Operators}.
\newblock Number 169 in Operator Theory: Advances and Applications.
  Birkh\"auser-Verlag, Basel, 2006.

\bibitem{Haa06dpre}
Markus Haase.
\newblock Functional calculus for groups and applications to evolution
  equations.
\newblock To appear in: Journal of Evolution Equations., 2007.

\bibitem{HilPhi}
Einar Hille and Ralph~S. Phillips.
\newblock {\em {Functional Analysis and Semi-Groups. 3rd printing of rev. ed.
  of 1957.}}
\newblock {American Mathematical Society, Colloquium Publications, Vol. XXXI.
  Providence, Rhode Island: The American Mathematical Society. XII, 808 p. },
  1974.

\end{thebibliography}
\def\cprime{$'$} \def\cprime{$'$} \def\cprime{$'$} \def\cprime{$'$}

\end{document}